\newtheorem{theorem}{Theorem}[section]
\newtheorem{corollary}[theorem]{Corollary}
\newtheorem{definition}[theorem]{Definition}
\newtheorem{lemma}[theorem]{Lemma}
\newtheorem{proposition}[theorem]{Proposition}
\newtheorem{remark}[theorem]{Remark}
\numberwithin{equation}{section}
\newcommand{\fr}{\operatorname{Fr}}
\newcommand{\ind}{\mathrm{ind}}
\newcommand{\rr}{\mathbb{R}}
\newcommand{\sph}{\mathbb{S}}
\newcommand{\kk}{\mathbb{K}}
\newcommand{\sect}{\operatorname{Sect}}
\newcommand{\ric}{\operatorname{Ric}}
\begin{document}

\title[Hurewicz fibrations, almost submetries and critical points]{Hurewicz fibrations, almost submetries and critical points of smooth maps}

\author{Sergio Cacciatori}

\address{Universt\`a dell'Insubria -  
Dipartimento di Scienza e Alta Tecnologia\endgraf
Via Valleggio 11, I-22100 Como, Italy
and\smallskip\endgraf INFN, Sezione di Milano, via Celoria 16, I-20133 Milano, Italy.}
\email{sergio.cacciatori@uninsubria.it}

\author{Stefano Pigola}
\address{Universt\`a dell'Insubria -  
Dipartimento di Scienza e Alta Tecnologia\endgraf
Via Valleggio 11, I-22100 Como, Italy}
\email{stefano.pigola@uninsubria.it}
\begin{abstract}
We prove that the existence of a  Hurewicz fibration between certain spaces with the homotopy type of a CW-complex implies some topological restrictions on their universal coverings. This result is used to deduce differentiable and metric properties of maps between compact Riemannian manifolds under curvature restrictions.
\end{abstract}

\date{\today}

\subjclass[2010]{55R05}
\keywords{Hurewicz fibration, almost submetry, critical point}

\maketitle

\section{Introduction and main results}
The main purpose of the paper is to prove that the existence of a  Hurewicz fibration between certain spaces with the homotopy type of a CW-complex implies certain topological restrictions on their universal coverings. For the sake of brevity, we shall say that the topological space $Z$ is $\kk$-acyclic if its reduced singular homology with coefficients in the field $\kk$ satisfies $\tilde H_n(Z;\kk) = 0$, for every $n \geq 0$.
\smallskip

\begin{theorem} \label{th-general}
Let $X$ be a connected, locally path connected and semi-locally simply connected, separable metric space with finite covering dimension $\dim X <+\infty$ and with the homotopy type of a CW-complex. Let $Y$ be a connected, locally path connected and semi-locally simply connected space with the homotopy type of a finite-dimensional CW-complex. Assume that there exists a Hurewicz fibration $\pi: X \to Y$.
\begin{itemize}
\item [(a)] If at least one fibre $F$ is locally contractible and  $X$ is aspherical then the universal covering space $Y'$ of $Y$ is $\mathbb{K}$-acyclic, for any field $\kk$.
\item [(b)] Let $\widetilde {\Omega_{{y}} Y}$ denote the connected component  of the loop space of $Y$ containing the constant loop $c_{y}\equiv y$. If $X$ is aspherical and the fibre $F=\pi^{-1}(y)$ is a finite dimensional CW-complex, then $H_{k} (\widetilde {\Omega_{y} Y}, \kk) = 0 $ for every  $k > \dim X$ and for any field $\kk$.
\end{itemize}
\end{theorem}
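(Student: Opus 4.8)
The plan is to reduce both statements to a single fibration with weakly contractible total space, obtained by pulling $\pi$ back to the universal cover of $Y$, and then to read off homology from a Serre spectral sequence. First I would form the pullback $\pi'\colon X' \to Y'$ of $\pi$ along the universal covering $q\colon Y' \to Y$. Since pullbacks of coverings are coverings and pullbacks of Hurewicz fibrations are Hurewicz fibrations, $\pi'$ is a fibration with the same fibre $F$, while $X' \to X$ is a covering; hence $X'$ is aspherical with the same universal cover $\widetilde X$ as $X$, and asphericity makes $\widetilde X$ weakly contractible. Composing the universal covering $\widetilde X \to X'$ (itself a Hurewicz fibration) with $\pi'$ yields a Hurewicz fibration $\widetilde\pi\colon \widetilde X \to Y'$ whose fibre $\widetilde F$ is a covering space of $F$. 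Because $\widetilde X$ is weakly contractible and $Y'$ is simply connected, the homotopy long exact sequence shows that $\widetilde F$ is connected and weakly equivalent to $\Omega Y'$; and since $q$ is the universal covering, unique path lifting gives a homeomorphism $\Omega Y' \cong \widetilde{\Omega_y Y}$, the component of the constant loop. This construction is common to both parts.

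The second ingredient I would establish is a bound on $H_*(\widetilde F;\kk)$. As $F = \pi^{-1}(y)$ is a subspace of the separable metric space $X$, monotonicity of the covering dimension gives $\dim F \le \dim X =: n$, and the covering $\widetilde F \to F$ preserves this. For part (b), $F$ is a finite-dimensional CW complex, so $\widetilde F$ is a CW complex with $\dim \widetilde F = \dim F \le n$, whence $H_k(\widetilde F;\kk) = 0$ for $k > n$; combined with $\widetilde F \simeq \widetilde{\Omega_y Y}$ this is precisely assertion (b). For part (a), local contractibility lets me compare singular with \v{C}ech theory, and over a field \v{C}ech cohomology of a separable metric space vanishes above its covering dimension, so again $H_k(\widetilde F;\kk) = 0$ for $k > n$. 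Either way $H_*(\widetilde F;\kk)$ is bounded above by $n$.

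For part (a) I would then run the homology Serre spectral sequence of $\widetilde F \to \widetilde X \to Y'$. Simple connectivity of $Y'$ trivialises the local system, so over the field $\kk$ one has $E^2_{p,q} = H_p(Y';\kk) \otimes H_q(\widetilde F;\kk)$, converging to $H_*(\widetilde X;\kk) = \kk$ concentrated in degree $0$. Suppose toward a contradiction that $\widetilde H_*(Y';\kk) \ne 0$. Since $Y'$ has the homotopy type of a finite-dimensional CW complex (being a covering of $Y$), $H_*(Y';\kk)$ is bounded; let $m \ge 2$ be its top nonzero degree and let $N \le n$ be the top nonzero degree of $H_*(\widetilde F;\kk)$. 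The corner term $E^2_{m,N} = H_m(Y';\kk) \otimes H_N(\widetilde F;\kk)$ is nonzero and, by maximality of $m$ and $N$, is the source and target of no nontrivial differential on any page; it therefore survives to $E^\infty$ and contributes to $H_{m+N}(\widetilde X;\kk)$, which vanishes because $m+N > 0$. This contradiction forces $\widetilde H_*(Y';\kk) = 0$, i.e.\ $Y'$ is $\kk$-acyclic.

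The genuinely routine parts are the standard facts about pullbacks, coverings, and composites of fibrations. The points needing care are the identification $\widetilde F \simeq \widetilde{\Omega_y Y}$, which relies on the weak contractibility of $\widetilde X$ coming from asphericity, and --- the step I expect to be the main obstacle --- the passage in part (a) from finite covering dimension of $F$ to the vanishing of its singular homology above degree $n$. It is exactly here that local contractibility is indispensable, since it is what permits comparing the singular theory (in which the spectral sequence lives) with the \v{C}ech theory (which sees the covering dimension); once this is in hand, the corner-survival argument in the Serre spectral sequence is short.
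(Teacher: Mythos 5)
Your proposal is correct in outline and reaches both conclusions, but it replaces the paper's key homological ingredient with a different one. The paper also lifts $\pi$ to a Hurewicz fibration $\pi'\colon X'\to Y'$ between universal covers (Proposition \ref{prop-lift-hur}; your pullback-then-universal-cover construction produces the same map), identifies the lifted fibre with $\Omega_{y'_0}Y'$ and with $\widetilde{\Omega_{y}Y}$ (Proposition \ref{prop-contrfibre}), and bounds the fibre's homology via the \v{C}ech--singular comparison (Proposition \ref{prop-cech}). For part (a), however, the paper then invokes Serre's theorem that a simply connected space with bounded, nonvanishing $\kk$-homology has a loop space with nonzero $H_k$ in infinitely many degrees, and contradicts this against the dimension bound on the fibre. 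You instead run the homology Serre spectral sequence of $\widetilde F\to \widetilde X\to Y'$ and kill the corner term $E^2_{m,N}=H_m(Y';\kk)\otimes H_N(\widetilde F;\kk)$: since $m$ and $N$ are top degrees, no differential touches it, so it survives to $E^\infty$ and contradicts $H_{m+N}(\widetilde X;\kk)=0$. This is valid (over a field the tensor product of nonzero spaces is nonzero, and the degree check $N+r-1>N$, $m+r>m$ is right), and it is arguably leaner: for part (a) it needs only that $H_*(\widetilde F;\kk)$ is bounded and nonzero in degree $0$, so the loop-space identification becomes unnecessary there, whereas the paper needs it for both parts. What the paper's route buys is the sharper qualitative output of Serre's theorem (homology of $\Omega Y'$ in infinitely many degrees), which makes the contradiction with \emph{any} finite-dimensional fibre immediate and feeds directly into part (b).

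Two points you compress deserve flagging, since they are where the paper spends most of its effort: (i) that the lifted fibre is a separable metric, locally contractible (resp.\ finite-dimensional CW) space of finite covering dimension, so that Proposition \ref{prop-cech} applies to it --- this requires the covering-space bookkeeping of Propositions \ref{prop-lift} and \ref{prop-lift-hur} (countability of $\pi_1$, metrizability of the cover, invariance of inductive dimension under local homeomorphisms); and (ii) that $Y'$ has the homotopy type of a finite-dimensional CW complex, which is not automatic from ``$Y$ does'' and is proved via Lemma \ref{lemma_equivcov} (homotopy equivalent spaces have homotopy equivalent universal covers). Also, your claimed homeomorphism $\Omega_{y'_0}Y'\cong\widetilde{\Omega_yY}$ needs continuity of the path-lifting map and the identification of the connected component of $c_y$ with its path component (which holds because $\Omega_yY$ has CW homotopy type); the paper sidesteps this by producing a homotopy equivalence through the path fibration. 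None of these is a fatal gap, but they are the technical substance of the result.
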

A variant of Theorem \ref{th-general} of special interest can be obtained in case $X$ and $Y$ are genuine CW-complexes. Recall that a finite dimensional CW-complex is a locally contractible (hence locally path connected and semi-locally simply connected), paracompact, normal space of finite covering dimension, \cite{FrPi-book, Miy-Tohoku}.

\begin{theorem}\label{corollary-cw}
Let $X$ and $Y$ be connected, finite dimensional, CW-complexes. Assume that there exists a Hurewicz fibration $\pi: X \to Y$ with at least one locally contractible fibre. If $X$ is aspherical then the universal covering space $Y'$ of $Y$ 
is $\kk$-acyclic, for any field $\kk$.
\end{theorem}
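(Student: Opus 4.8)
The plan is to deduce Theorem \ref{corollary-cw} from Theorem \ref{th-general}(a) by checking that connected finite-dimensional CW-complexes meet (essentially) all of its hypotheses. For the base $Y$ there is nothing to add: Theorem \ref{th-general}(a) only asks that $Y$ be connected, locally path connected, semi-locally simply connected and of the homotopy type of a finite-dimensional CW-complex, and all of these are immediate from the recalled structural properties of CW-complexes. For the total space $X$ the same recalled facts give connectedness, local contractibility (hence local path connectedness and semi-local simple connectedness), finite covering dimension $\dim X<+\infty$ and, trivially, the homotopy type of a CW-complex. The Hurewicz fibration, the existence of a locally contractible fibre, and the asphericity of $X$ are hypotheses carried over verbatim. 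Thus the only clause of Theorem \ref{th-general}(a) that is not literally available is the requirement that $X$ be a \emph{separable metric} space, since a finite-dimensional CW-complex (for instance an uncountable wedge of arcs) need be neither metrizable nor separable.

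First I would isolate the exact role played by this metric hypothesis in the proof of Theorem \ref{th-general}. The recalled description of finite-dimensional CW-complexes as \emph{paracompact, normal} spaces \emph{of finite covering dimension} is the guiding hint: a separable metric space of finite covering dimension is in particular paracompact, normal and finite-dimensional, and these three properties are what one actually needs in order to run the covering-space, partition-of-unity and dimension-theoretic parts of the argument. The plan is therefore to verify that every step of the proof of Theorem \ref{th-general}(a) that invokes ``$X$ separable metric'' uses it only through paracompactness, normality and $\dim X<+\infty$; granting this, the identical proof applies with $X$ a finite-dimensional CW-complex and yields that the universal cover $Y'$ is $\kk$-acyclic for every field $\kk$.

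The point that will require the most care is precisely this verification: one must make sure that no auxiliary construction in Theorem \ref{th-general} secretly exploits metrizability beyond these three consequences --- for instance any appeal to sequences or first countability, to a metric on the relevant function or loop spaces, or to a form of dimension theory (coincidence of covering and inductive dimension, behaviour of dimension under products or preimages) that is special to metric spaces. Wherever such a step occurs it must be replaced by its paracompact--normal--finite-dimensional counterpart, using the cited structural results \cite{FrPi-book, Miy-Tohoku}. I expect this to be the main obstacle, since it is the one place where the corollary is not a formal specialization.

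As an alternative, more hands-on route that avoids re-reading the whole proof, I would attempt a reduction to the separable case directly on the conclusion. Since $Y'$ is itself a finite-dimensional CW-complex, its $\kk$-homology is the filtered colimit of the $\kk$-homologies of its finite subcomplexes, so if $\tilde H_n(Y';\kk)\neq 0$ for some $n$ the class is already detected on a finite, hence countable, subcomplex. The idea would be to absorb this data into a countable connected subcomplex $Y_0\subset Y$, pull the fibration back over $Y_0$, and apply the theorem there, where the spaces in play can be arranged to be separable metrizable. The drawback of this route --- and the reason I would keep the first approach as the primary one --- is that passing to $Y_0$ need not preserve the asphericity of the restricted total space nor the relation between the universal cover of $Y_0$ and the class detected in $Y'$, so controlling the reduction is delicate.
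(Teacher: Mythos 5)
Your primary route is essentially the paper's: the separable-metric hypothesis on $X$ in Theorem \ref{th-general} enters only through Proposition \ref{prop-cech}, i.e.\ only to guarantee that the lifted fibre $F'_{0}\simeq\Omega_{y'_{0}}Y'$ is (homotopy equivalent to) a locally contractible, paracompact space of finite covering dimension, and the paper's proof of Theorem \ref{corollary-cw} replaces that hypothesis exactly as you propose, by observing that $F'_{0}$ is a closed subspace of the universal cover $X'$, which is itself a finite-dimensional CW-complex and hence paracompact with covering dimension equal to its CW-dimension. The verification you defer is therefore short and unproblematic --- no other step of the argument uses metrizability --- so your plan is correct and matches the paper's proof; the alternative reduction to a countable subcomplex is not needed.
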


Our main motivation to investigate topological properties of  Hurewicz fibrations is to get information on  maps between compact Riemannian manifolds under curvature restrictions. Indeed, recall that smooth manifolds are finite-dimensional, separable metric space and also finite dimensional CW-complexes. By way of example, we point out the following consequences of Theorem  \ref{th-general} or Theorem \ref{corollary-cw}. First, we consider critical points of smooth maps. Recall that a point $p \in M$ is critical for the $C^{1}$-map $f: M \to N$ if $f$ is not submersive at $p$.
\begin{corollary}\label{corollary-criticalpoints}
Let $f: M \to N$ be a smooth map between compact Riemannian manifolds $(M,g)$ and $(N,h)$ of dimensions $m , n \geq 2$. Assume also that $\sect_M \leq 0$ and $\ric_N \geq K$ for some constant $K>0$. Then $f$ must have a critical point.
\end{corollary}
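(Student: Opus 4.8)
The plan is to prove this by contradiction. Suppose $f: M \to N$ has no critical point. Then $f$ is a submersion everywhere, and since $M$ is compact, $f$ is a proper submersion. By Ehresmann's theorem, a proper submersion is a locally trivial fibre bundle, hence in particular a Hurewicz fibration $\pi = f: M \to N$ whose fibre $F = f^{-1}(y)$ is a compact smooth manifold — in particular a finite-dimensional CW-complex that is locally contractible. Thus I would be in a position to apply Theorem \ref{th-general} or Theorem \ref{corollary-cw}, provided I can verify their hypotheses: that $M$ is aspherical, and that the relevant acyclicity conclusion on the universal cover of $N$ contradicts the curvature assumption on $N$.

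The first geometric input is the curvature hypothesis $\sect_M \leq 0$. By the Cartan--Hadamard theorem, the universal cover of the compact manifold $M$ is diffeomorphic to $\rr^m$, hence contractible; therefore $M$ is aspherical (a $K(\pi,1)$), which is exactly the hypothesis needed in both theorems. Since $M$ and $N$ are compact smooth manifolds, they are finite-dimensional separable metric spaces with the homotopy type of finite-dimensional CW-complexes (indeed they are finite CW-complexes), so all the standing hypotheses of Theorem \ref{th-general} are met with $X = M$ and $Y = N$. Applying part (a) of Theorem \ref{th-general} (or Theorem \ref{corollary-cw}), I conclude that the universal covering space $N'$ of $N$ is $\kk$-acyclic for every field $\kk$.

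The second geometric input is the curvature hypothesis $\ric_N \geq K > 0$. The universal cover $N'$, equipped with the pullback metric, is a complete Riemannian manifold of dimension $n$ still satisfying $\ric_{N'} \geq K > 0$. By the Bonnet--Myers theorem, $N'$ is compact (with finite fundamental group, though compactness is what matters here). Now I reach the contradiction: $N'$ is a closed orientable-or-not smooth $n$-manifold, and for such a manifold the top homology with suitable field coefficients is nontrivial. Concretely, taking $\kk = \zz/2\zz$, PoincarĂ© duality for the closed $n$-manifold $N'$ gives $H_n(N'; \zz/2) \cong H^0(N'; \zz/2) \cong \zz/2 \neq 0$, since $N'$ is connected and has a $\zz/2$ fundamental class regardless of orientability. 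This contradicts the $\kk$-acyclicity of $N'$ (which forces $\tilde H_n(N';\kk)=0$, in particular $H_n(N';\zz/2)=0$ as $n \geq 2 > 0$). Hence no such $f$ can be a submersion everywhere, and $f$ must have a critical point.

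The main obstacle, and the point requiring the most care, is the verification that an everywhere-submersion $f$ from the compact $M$ really is a Hurewicz fibration with a well-behaved fibre: this rests on Ehresmann's theorem, whose properness hypothesis is guaranteed precisely by compactness of $M$, and on the observation that the fibre of a smooth fibre bundle is a closed smooth manifold and hence locally contractible and a finite-dimensional CW-complex. A secondary subtlety is the choice of coefficient field: orientability of $N'$ is not assumed, so I deliberately use $\kk = \zz/2$ to extract a nonvanishing top homology class via $\zz/2$-PoincarĂ© duality, which holds for every closed manifold. The dimension hypotheses $m, n \geq 2$ ensure the spaces are genuinely higher-dimensional so that the asphericity and acyclicity arguments are not vacuous and the top-degree class sits in positive degree.
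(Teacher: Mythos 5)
Your proposal is correct and follows essentially the same route as the paper: Cartan--Hadamard for asphericity of $M$, Ehresmann plus paracompactness of the base to get a Hurewicz fibration with manifold fibre, Bonnet--Myers for compactness of $N'$, and nonvanishing top homology of $N'$ to contradict $\kk$-acyclicity. The only (cosmetic) difference is that you use $\zz/2$ coefficients and Poincar\'e duality for the last step, whereas the paper observes that $N'$ is simply connected, hence orientable, and takes $\kk=\rr$.
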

\begin{proof}
Let $m \geq n$ for, otherwise, the result is trivial. By the Bonnet-Myers theorem, the universal covering space $N'$ of $N$ is compact. Since $N'$ is simply connected, hence orientable, $H_n(N';\rr) \not=0$. On the other hand, by the Cartan-Hadamard theorem $M'$ is diffeomorphic to $\rr^m$, hence $M$ is aspherical. Now, by contradiction, assume that $f:M \to N$ is a smooth map without critical points. Then, the Ehresmann fibration theorem  implies that $f$ is a locally trivial bundle. Since $N$ is (para)compact, $f$ is also a Hurewicz fibration with fibre a smooth $(m-n)$-dimensional manifold. This contradicts Theorem \ref{th-general} or Theorem \ref{corollary-cw}.
\end{proof}

\begin{remark}
\rm{
Existence of critical points for any smooth map $f:\times_1^m \sph^1 \to \mathbb{SO}(3)$ was observed by D. Gottlieb, \cite{Go-Robot}, in relation with critical configurations of multi-linked robot arms.
}
\end{remark}

\begin{remark}\label{rem-curv}
\rm{It is clear from the proof that the role of the curvature of $M$ is just to guarantee that the covering space $M'$ is contractible. Obviously, we do not need that $M'$ is diffeomorphic to $\rr^m$ and therefore the Corollary applies e.g. when $M$ is a compact quotient (if any) of a Whitehead-like manifold. Similarly, the Ricci curvature condition on the target has the purpose to guarantee that the universal covering manifold $N'$ is compact. Thus, we can take e.g. $N$ to be any quotient of the  product $\sph^{n_{1}}\times \sph^{n_{2}}$ with $n_{1},n_{2}\geq 2$. Summarizing, the following differential topological result holds true.}
\end{remark}

\begin{corollary}
 Let $f : M \to N$ be a smooth map between compact differentiable manifolds of dimensions $m , n \geq 2$. If $M$ is aspherical and the universal covering space of $N$ is compact then, necessarily, $f$ has a critical point. 
\end{corollary}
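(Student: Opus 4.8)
The plan is to follow the argument already carried out for Corollary \ref{corollary-criticalpoints}, observing that the curvature hypotheses there were used only to produce the two topological facts we are now assuming outright: that $M$ is aspherical, and that the universal covering $N'$ is compact. First I would dispose of the easy range $m < n$: here the differential $df_p : T_p M \to T_{f(p)} N$ maps an $m$-dimensional space into an $n$-dimensional one, so it can never be surjective, whence every point of $M$ is critical and there is nothing to prove. Thus I may assume $m \geq n$ and take $M$ connected (as is implicit in its being aspherical).

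Next I would record the topological input on the target. Being a covering of a connected manifold, $N'$ is a connected $n$-manifold; being a universal covering it is simply connected, hence orientable; and by hypothesis it is compact. Therefore $H_n(N';\rr) \cong \rr \neq 0$, so $N'$ is not $\rr$-acyclic, and in particular $\tilde H_n(N';\rr) \neq 0$ since $n \geq 2 > 0$.

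The core step is to derive a contradiction from the absence of critical points. Arguing by contradiction, suppose $f$ has no critical point, so that $f$ is a submersion at every point of the compact manifold $M$. By the Ehresmann fibration theorem $f$ is then a locally trivial fibre bundle, and since $N$ is compact, hence paracompact, $f$ is a Hurewicz fibration. Its typical fibre $F$ is a compact smooth manifold of dimension $m-n \geq 0$, hence a finite-dimensional, locally contractible CW-complex; when $m = n$ the fibre is a finite discrete set, which is still locally contractible. Since $M$ is a compact smooth manifold, it is a connected, locally path connected, semi-locally simply connected, separable metric space of finite covering dimension with the homotopy type of a finite CW-complex, and it is aspherical by assumption. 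Thus all hypotheses of part (a) of Theorem \ref{th-general} (equivalently Theorem \ref{corollary-cw}) are met with $X = M$ and $Y = N$, and we conclude that $Y' = N'$ is $\kk$-acyclic for every field $\kk$.

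Taking $\kk = \rr$ yields $\tilde H_n(N';\rr) = 0$, contradicting the computation of the previous paragraph. Hence $f$ must have a critical point. I expect the only point requiring genuine care to be the bookkeeping in the third step, namely the verification that compact smooth manifolds satisfy every regularity hypothesis demanded of $X$ in Theorem \ref{th-general} and that the possibly $0$-dimensional fibre remains locally contractible; beyond this the statement is an immediate specialization of the machinery already established, the substantive work having been absorbed into Theorem \ref{th-general}.
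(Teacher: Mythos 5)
Your proposal is correct and follows essentially the same route as the paper: the paper proves this corollary by observing (Remark \ref{rem-curv}) that the argument for Corollary \ref{corollary-criticalpoints} uses the curvature hypotheses only to secure asphericity of $M$ and compactness of $N'$, which are now assumed outright, and then runs the identical Ehresmann--Hurewicz--Theorem \ref{th-general} contradiction. Your extra care about the $m=n$ discrete fibre and the verification that a compact manifold meets the regularity hypotheses of Theorem \ref{th-general} is sound but does not change the argument.
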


The next application is an estimate of the $\epsilon$-constant for  $e^{\epsilon}$-LcL maps under curvature restrictions.
\begin{definition}
A continuous map $f : X \to Y$ between metric spaces is called an $e^{\epsilon}$-Lipschitz
and co-Lipschitz map ($e^{\epsilon}$-LcL for short), if for any $p \in X$, and any $r > 0$, the metric balls of $X$ and $Y$ satisfy
\[
B^{Y}_{e^{-\epsilon}r}(f(p)) \subseteq f(B^{X}_r(p)) \subseteq B^{Y}_{e^{\epsilon}r}(f(p)).
\]
A $1$-LcL is a submetry in the usual sense of V. Berestovskii.
\end{definition}
Fibration properties of $e^{\epsilon}$-LcL maps has been investigated in \cite{RoXu-advances}. More recent results are contained in \cite{Xu}.

\begin{corollary}\label{corollary-LCL}
Let $M$ and $N$ be compact Riemannian manifolds such that $\sect_M \leq 0$ and $\ric_N \geq K$ for some constant $K >0$. If $f : M \to N$ is an $e^{\epsilon}$-LcL map then $\epsilon > \ln (1.02368)$. 
\end{corollary}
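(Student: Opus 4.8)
The plan is to argue by contradiction and reduce the statement to the fibration obstruction provided by Theorem \ref{th-general}, exactly along the lines of Corollary \ref{corollary-criticalpoints}. First I would record the two curvature consequences. Since $\sect_M \leq 0$, the Cartan--Hadamard theorem shows that the universal cover of $M$ is diffeomorphic to $\rr^m$, so $M$ is aspherical. Since $\ric_N \geq K > 0$, the Bonnet--Myers theorem shows that the universal cover $N'$ of $N$ is compact; being simply connected it is orientable, whence $H_n(N'; \rr) \neq 0$. These are precisely the ingredients that make Theorem \ref{th-general} applicable with $X = M$ and $Y = N$, provided I can certify that $f$ is a Hurewicz fibration with a sufficiently tame fibre.

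The heart of the matter is therefore to convert the metric hypothesis that $f$ is $e^{\epsilon}$-LcL with $\epsilon$ small into the topological statement that $f$ is a Hurewicz fibration. This is the content of the stability and fibration results for almost submetries in \cite{RoXu-advances} (see also \cite{Xu}): there is an explicit threshold $\epsilon_0$ below which an $e^{\epsilon}$-LcL map between compact Riemannian manifolds is a Hurewicz fibration whose fibre is locally contractible. Assuming, for contradiction, that $\epsilon \leq \ln(1.02368)$, I would invoke the precise statement giving $e^{\epsilon_0} = 1.02368$ and check that all of its hypotheses are met by our $f$, so that $f$ becomes a Hurewicz fibration with locally contractible fibre.

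With $f$ recognized as such and with $M$ aspherical, Theorem \ref{th-general}(a) (or, equivalently, Theorem \ref{corollary-cw}, since $M$ and $N$ are finite-dimensional CW-complexes) forces $N' = Y'$ to be $\kk$-acyclic for every field $\kk$; in particular $H_n(N'; \rr) = 0$. This contradicts $H_n(N'; \rr) \neq 0$ obtained in the first step. Hence the assumption $\epsilon \leq \ln(1.02368)$ is untenable, and we conclude $\epsilon > \ln(1.02368)$.

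I expect the main obstacle to lie in the second paragraph: pinning down the exact fibration theorem from \cite{RoXu-advances} that produces the numerical constant $1.02368$, and verifying that its regularity conclusion on the fibre (local contractibility, or the homotopy type of a finite-dimensional CW-complex) is strong enough to feed into Theorem \ref{th-general}. Once the fibration is in hand the topological contradiction is immediate; all the delicate quantitative work is concentrated in the almost-submetry-to-fibration passage and in matching its threshold to the stated value.
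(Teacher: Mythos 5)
Your proposal is correct and follows essentially the same route as the paper: Cartan--Hadamard gives asphericity of $M$, Bonnet--Myers gives compactness of $N'$ hence $H_n(N';\rr)\neq 0$, and the almost-submetry-to-fibration theorem turns a $(1.02368)$-LcL map into a Hurewicz fibration with locally contractible fibres, contradicting Theorem \ref{th-general}. The only detail to fix is the attribution: the quantitative fibration statement with the constant $1.02368$ is \cite[Theorem A]{Xu} (for proper maps from complete manifolds with curvature bounded below), not \cite{RoXu-advances}.
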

\begin{proof}
By \cite[Theorem A]{Xu} any $(1.02368)$-LcL proper map from a complete Riemannian manifold with curvature bounded from below into any  Riemannian manifold is a Hurewicz fibration. Moreover, all the fibres are locally contractible. On the other hand, as we have already observed above, the curvature assumptions on $M$ and $N$ imply that $M$ is aspherical and $N'$ is not $\rr$-acyclic.
\end{proof}

\begin{remark}
\rm{
The previous result could be stated in the more general setting of Alexandrov spaces. Namely, we can choose $M$ to be a compact Alexandrov-space of finite dimension, with the homotopy type of a CW complex, and  curvature $-C \leq \mathrm{Curv}_M \leq 0$. Indeed, \cite[Theorem A]{Xu} is already stated in this metric setting and, in order to obtain that the universal covering space $M'$ is contractible we can apply the metric version of the Cartan-Hadamard theorem by S. Alexander and R. L. Bishop. See e.g. \cite[Theorem II.4.1 and Lemma II.4.5]{BH-book}.
}
\end{remark}

\begin{remark}
\rm{
Concerning the curvature assumptions what we said in Remark \ref{rem-curv} applies also to Corollary \ref{corollary-LCL}. In fact, we have the following surprisingly general result.
}
\end{remark}

\begin{corollary}
Let $M$ be a compact smooth manifold with contractible universal covering $M'$ and let $N$ be a compact  smooth manifold with compact universal covering $N'$. Then, for any fixed Riemannian metrics $g$ on $M$ and $h$ on $N$ and for every  $e^{\epsilon}$-LcL map  $f:(M,g) \to (N,h)$ it holds $\epsilon > \ln(1.02368)$.
\end{corollary}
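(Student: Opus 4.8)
The plan is to reduce this corollary to the already-proven Corollary \ref{corollary-LCL} by observing that the only way the curvature hypotheses there entered the argument was through two purely topological consequences: that $M$ is aspherical (equivalently, that its universal cover $M'$ is contractible) and that $N'$ is not $\rr$-acyclic. Since the present statement hands us exactly these topological conditions directly—$M'$ contractible and $N'$ compact—I expect no genuinely new work to be needed, only a careful re-tracing of the earlier proof with the curvature steps excised.

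First I would fix arbitrary Riemannian metrics $g$ on $M$ and $h$ on $N$ and suppose, for contradiction, that some $e^{\epsilon}$-LcL map $f:(M,g)\to(N,h)$ has $\epsilon \leq \ln(1.02368)$; then $f$ is in particular a $(1.02368)$-LcL map. Because $M$ is compact it is a complete Riemannian manifold, and being compact it has sectional curvature bounded below (any continuous function on a compact set is bounded), so the hypotheses of \cite[Theorem~A]{Xu} are met and $f$ is a proper map. Invoking that theorem, $f$ is a Hurewicz fibration all of whose fibres are locally contractible; since $M$ is a smooth manifold it is a connected, finite-dimensional CW-complex, so the setup of Theorem \ref{corollary-cw} applies verbatim.

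Next I would supply the two topological inputs. Asphericity of $M$ is immediate: $M'$ is contractible by hypothesis, so all higher homotopy groups of $M$ vanish, which is exactly what asphericity means. For the target, since $N'$ is a compact simply connected manifold it is orientable, whence $H_n(N';\rr)\neq 0$ where $n=\dim N\geq 2$; in particular $\tilde H_n(N';\rr)\neq 0$, so $N'$ is not $\rr$-acyclic. These are precisely the conclusions drawn from the curvature assumptions in the proof of Corollary \ref{corollary-LCL}, now obtained from the weaker combinatorial hypotheses.

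Applying Theorem \ref{corollary-cw} to the fibration $\pi=f:M\to N$ then forces the universal covering $N'$ to be $\kk$-acyclic for every field $\kk$, in particular for $\kk=\rr$, contradicting the non-vanishing of $H_n(N';\rr)$. This contradiction shows $\epsilon>\ln(1.02368)$, completing the proof. I do not anticipate a real obstacle here, since the argument is a transparent specialization of the preceding corollary; the only point demanding a line of care is verifying that a compact manifold satisfies the completeness and lower-curvature-bound hypotheses of \cite[Theorem~A]{Xu}, and that $N'$ compact-and-simply-connected indeed yields a nonzero top homology class.
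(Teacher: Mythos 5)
Your proposal is correct and follows exactly the route the paper intends: the paper gives no separate proof of this corollary, relying instead on the remark that the curvature hypotheses in Corollary \ref{corollary-LCL} served only to make $M$ aspherical and $N'$ compact (hence not $\rr$-acyclic), which is precisely the reduction you carry out. Your added care about properness and the lower curvature bound following from compactness of $M$ is a correct and harmless elaboration of the same argument.
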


The paper is organized as follows: in Section 2 we collect some preliminary facts concerning Hurewicz fibrations, loop spaces and the cohomological dimension of a space in connection with the covering space theory. Section 3 is devoted to the proofs of Theorems \ref{th-general} and \ref{corollary-cw}.

\section{Preliminary results}
In this section we lift a Hurewicz fibration to the universal covering spaces without changing the assumption of the main theorem and, as a consequence, we deduce some information on the singular homology  of the lifted fibre.

\subsection{Lifting Hurewicz fibrations}
In the next result we collect some properties of our interest that a generic covering space inherits from its base space.
\begin{proposition}\label{prop-lift}
Let $P:E' \to E$ be a covering projection between path connected spaces. Then, each of the following properties lifts from the base space $E$ to the covering space $E'$.
\begin{enumerate}
\item[(a)] The space is a CW-complex.
\item[(b)] The space has the homotopy type of a CW-complex.
\item[(c)] The space is regular (and $T_1$) and has finite small inductive dimension.
\item[(d)] The space is locally path connected and $\mathrm{II}$-countable.
\item[(e)] The space is locally path connected, separable and metrizable.
\end{enumerate}
Moreover, if $E'$ is simply connected, i.e. $P:E' \to E$ is the universal covering of $E$, then (b) can be replaced by
\begin{enumerate}
 \item [(b')] The space is Hausdorff, locally path connected and has the homotopy type of a finite dimensional CW-complex.
\end{enumerate}
\end{proposition}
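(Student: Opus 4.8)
The plan is to split the six properties into those that are \emph{local}, and so lift directly from the local homeomorphism structure of $P$, and the genuinely \emph{homotopy-theoretic} ones, which require global input. Recall that a covering projection is a local homeomorphism with closed discrete fibres: every point of $E$ has an evenly covered neighbourhood $U$ over which $P^{-1}(U)$ is a disjoint union of open sheets, each mapped homeomorphically onto $U$. I would first dispatch (c) and the local halves of (d) and (e). Since $\{P(e')\}$ is closed in the $T_1$ space $E$ and the fibre is closed and discrete, points of $E'$ are closed, so $E'$ is $T_1$; regularity is obtained by separating a point from a closed set inside a single sheet, where $P$ restricts to a homeomorphism. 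Local path connectedness is immediate, being a local property. For the small inductive dimension I would use that $\ind$ is defined purely locally, through boundaries of small neighbourhoods: given $e' \in E'$, any neighbourhood can be shrunk inside a sheet $\tilde U \cong U$, and $P|_{\tilde U}$ carries boundaries to boundaries, so induction on $n$ yields $\ind E' \le \ind E < \infty$.

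For (a) I would build a CW-structure on $E'$ by lifting cells. The preimage of the $0$-skeleton is a discrete set, taken as the $0$-cells of $E'$. Proceeding by induction on skeleta, each characteristic map $\Phi\colon D^n \to E$ of a cell of $E$ lifts along $P$ (the disk $D^n$ being simply connected and locally path connected), and $P^{-1}$ of the corresponding open cell is a disjoint union of homeomorphic copies; declaring these the $n$-cells of $E'$ and verifying the weak-topology axiom, which is local and hence inherited, produces a CW-structure of the same dimension as that of $E$.

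The core of the argument, and the main obstacle, is (b) together with (b'), since having the homotopy type of a CW-complex is not a local property. Here I would fix a CW-approximation, that is, a homotopy equivalence $f\colon W \to E$ with $W$ a CW-complex, and form the pulled-back covering $f^{*}E' \to W$. This is a covering of a CW-complex, hence a CW-complex by part (a); moreover the canonical map $f^{*}E' \to E'$ is a homotopy equivalence. This last point is exactly the statement that a fibration pulled back along a homotopy equivalence of suitable (e.g.\ CW or paracompact) bases has homotopy-equivalent total space, in the sense of Dold, and it is where the non-local input enters. Thus $E'$ has the homotopy type of $f^{*}E'$, giving (b). For (b') I would additionally observe that when $P$ is the universal covering, the isomorphism $f_{*}\colon \pi_1(W) \to \pi_1(E)$ forces $f^{*}E'$ to be the universal covering of $W$; by part (a) its dimension equals $\dim W < \infty$, so $E'$ inherits the homotopy type of a \emph{finite-dimensional} CW-complex, while Hausdorffness and local path connectedness lift as local properties.

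Finally, for the global countability in (d) and (e) I would reduce everything to countability of the fibres. The plan is to show $\pi_1(E)$ is countable: a second countable, locally path connected and semi-locally simply connected space admits a countable open cover by path connected, simply-covered sets whose associated loops generate $\pi_1(E)$, so $\pi_1(E)$, being generated by a countable set, is countable. Since $E'$ is path connected, the monodromy action of $\pi_1(E)$ on the fibre is transitive, whence the fibre is countable. Intersecting the sheets lying over a countable base of $E$ then yields a countable base for $E'$, proving the countability in (d); separability follows at once, and for (e) the space $E'$ is regular by (c) and second countable, so Urysohn's metrization theorem makes it metrizable. I expect the delicate point of this last part to be precisely the passage from a local homeomorphism to genuine global countability, since one must bound the number of sheets, which is where the countability of $\pi_1(E)$ is indispensable.
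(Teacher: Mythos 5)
Your proposal is correct, and for most parts it travels the same road as the paper: in (c), (d), (e) your arguments (closed discrete fibres for $T_1$, separation inside a single sheet, the inductive-dimension bound carried through an evenly covered neighbourhood, reduction of second countability to countability of the fibre via the bijection with $\pi_1(E,e)/P_{\ast}\pi_1(E',e')$, then Urysohn) are essentially verbatim the paper's; your insistence on semi-local simple connectedness to get $\pi_1(E)$ countable is in fact what makes the Lee-type counting argument run, and is a more careful rendering of the step the paper delegates to the proof of \cite[Theorem 8.11]{Lee-top}. Part (a) is in the paper a citation of \cite[Proposition 2.3.9]{FrPi-book}; your lift-the-cells sketch is the standard proof of that citation, though the remark that the weak-topology axiom ``is local and hence inherited'' is too quick --- that axiom is not local and needs the skeleton-by-skeleton argument. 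The genuine divergence is in (b) and (b$'$). The paper proves (b) by regarding $P$ itself as a Hurewicz fibration with discrete (hence CW) fibres and invoking \cite[Theorem 5.4.2]{FrPi-book} on total spaces of fibrations with base and fibre of CW homotopy type, and proves (b$'$) via a separate elementary lemma (Lemma \ref{lemma_equivcov}) that lifts a homotopy equivalence of base spaces to one of universal covers by correcting lifted maps and homotopies with deck transformations. You instead pull the covering back along a CW approximation $f:W\to E$ and appeal to the Dold-type theorem that the pullback of a fibration along a homotopy equivalence induces a homotopy equivalence of total spaces. Your route is more uniform --- it yields (b) and (b$'$) in one stroke, with finite-dimensionality for free because $f^{*}E'$ is the universal cover of the finite-dimensional $W$ --- at the price of importing the fibre-homotopy-equivalence machinery, whereas the paper's deck-transformation lemma is self-contained covering-space theory. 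One small correction: Hausdorffness is not a local property, so it does not ``lift as a local property''; the right (easy) argument, which the paper gives, separates two points of the same fibre by distinct sheets over an evenly covered neighbourhood and two points of different fibres by preimages of disjoint open sets of $E$.
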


\begin{remark}
\rm{
It will be clear from the proof of (d) that $E'$ is $\mathrm{II}$-countable provided that $E$ has the homotopy type of a $\mathrm{II}$-countable, CW-complex $Y$ with a countable $1$-skeleton $Y^1$. Indeed, the injection $i:Y^1 \to Y$ induces a surjective homomorphism between fundamental groups $i_{\ast} : \pi_1(Y^1, \ast) \to \pi_1(Y,\ast) \simeq \pi_1(E,\ast)$, \cite[Proposition 1.26]{Ha-book} or \cite[Corollary 2.4.7]{FrPi-book}.
}
\end{remark}

\begin{proof}
(a) Indeed, the $n^{\mathrm{th}}$-skeletons $(E')^n$ of $E'$ and $E^n$ of $E$ are related by $P^{-1}(E^n) = (E')^n$ and $P|_{(E')^{n}}:(E')^{n} \to E^{n}$ is a covering projection; \cite[Proposition 2.3.9]{FrPi-book}.\smallskip

(b) Assume that $E$ has the homotopy type of a CW-complex. Since the covering projection $P$ is a Hurewicz fibration, \cite[Theorem II.2.3]{Sp}, and each fibre $P^{-1}(e)$ is a discrete space, hence a CW-complex, the conclusion follows by applying \cite[Theorem 5.4.2]{FrPi-book}. \smallskip

(c) Assume that $E$ is regular, i.e., it is a $T_1$ space with the shrinking property of arbitrarily small open neighborhoods. Then $E'$ is also $T_1$ because for any $e' \in E'$, the fibre $P^{-1}(P(e'))$ is a discrete and closed subspace of $E'$. The shrinking property of $E'$ follows easily from the fact that $P$ is a local homeomorphism. Finally, let us show that the regular space $E'$ has small inductive dimension $\ind(E') \leq n <+\infty$ provided $\ind(E) \leq n <+\infty$. To this end, fix $e' \in E'$  and an open neighborhood $U'$ of $e'$. We have to show that there exists an open set $e' \in V' \subset U$ such that $\ind(\fr_{E'} V') \leq n-1$. Here, $\fr_{X}Y$ denotes the topological boundary of $Y$ as a subspace of $X$. Without loss of generality, we can assume that $P|_{U'} :U' \to U=P(U')$ is a homeomorphism. Since $\ind(U) \leq \ind(E)$, \cite[1.1.2]{En-dim}, by the topological invariance of the inductive dimension we have $\ind(U') \leq n$. By regularity, we can choose an open neighborhood $e' \in U'_0 \subset \overline{U'_0} \subset U'$. It follows from the definition of inductive dimension of $U'$ that there exists an open set (in $U'$ hence in $E'$) $e' \in V' \subset U'_0$ such that  $\ind(\fr_{U'} V') \leq n-1$. To conclude, observe that $\fr_{U'} {V'} = \fr_{E'} {V'}$. 
\smallskip

(d) Assume that $E$ is locally path connected and $\mathrm{II}$-countable. Since $P$ is a local homeomorphism then $E'$ is locally path-connected. Moreover, since the topology of $E$ has a countable basis made up by open sets evenly covered by $P$, to prove that $E'$ is $\mathrm{II}$-countable it is enough to verify that the fibre of the covering projection is a countable set. On the other hand, the fibre $P^{-1}(e)$ is in one-to-one correspondence with the co-set $\pi_1(E,e)/P_{\ast}\pi_1(E',e')$ of the fundamental group $\pi_1(E,e)$ of $(E,e)$. To conclude, we recall that $\pi_1(E,e)$ is  countable  because the topology of the path connected space $E$ has a countable basis $\mathcal{U}$ such that, for any $U_1, U_2 \in \mathcal{U}$, the connected components of $U_1 \cap U_2$ are open (hence countable); see e.g. the proof of \cite[Theorem 8.11]{Lee-top}.
\smallskip

(e) We already know that $E'$ is a ($T_1$-)regular, $\mathrm{II}$-countable (hence separable) topological space. Therefore, we can apply the Urysohn metrization theorem.\smallskip

(b') The Hausdorff property lifts from the base space $E$ to the covering space $E'$: two points on the same fibre are separated by the pre-images of an evenly covered open set of $E$, whereas two points on different fibres are separated by the pre-images of disjoint open sets in $E$. Moreover $E'$ is locally path connected as already observed.\\
Assume that $P:E' \to E$ is the universal covering projection and that the Hausdorff space $E$ has the homotopy type of a CW-complex $Y$ with $\dim Y <+\infty$. Recall that $Y$ is Hausdorff (actually normal) and that the (Lebesgue) covering dimension $\dim Y$ coincides with the dimension of $Y$ as a CW-complex; \cite[Propositions 1.2.1, 1.5.14]{FrPi-book}. Let $Q: Y' \to Y$ be the universal covering of $Y$. We know from (a) that $Y'$ is a CW-complex of dimension $\dim Y' = \dim Y$. We shall show that $E'$ has the same homotopy type of $Y'$, that is:
\begin{lemma}\label{lemma_equivcov}
Let $P_X:X' \to X$ and $P_Y:Y' \to Y$ be universal covering projections between Hausdorff spaces. If $X$ is homotopy equivalent to $Y$ then $X'$ is homotopy equivalent to $Y'$.
\end{lemma}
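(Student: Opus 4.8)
The plan is to lift the homotopy equivalence and a homotopy inverse to the universal coverings and then to show that the two lifted composites are homotopic to deck transformations; since deck transformations are homeomorphisms, this forces the lift of $f$ to be a homotopy equivalence. Concretely, I would fix a homotopy equivalence $f : X \to Y$ together with a homotopy inverse $g : Y \to X$, so that $g \circ f \simeq \mathrm{id}_X$ and $f \circ g \simeq \mathrm{id}_Y$. Since $X$ and $Y$ admit universal coverings they are path connected and locally path connected, and, $P_X$ and $P_Y$ being local homeomorphisms, these features together with path connectedness pass to $X'$ and $Y'$; moreover $X'$ and $Y'$ are simply connected. Thus the classical lifting criterion applies to the maps $f \circ P_X : X' \to Y$ and $g \circ P_Y : Y' \to X$: because $\pi_1(X') = \pi_1(Y') = 1$ the subgroup condition is vacuous, and I obtain continuous lifts $\tilde f : X' \to Y'$ with $P_Y \circ \tilde f = f \circ P_X$ and $\tilde g : Y' \to X'$ with $P_X \circ \tilde g = g \circ P_Y$.

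Next I would analyse the composite $\tilde g \circ \tilde f : X' \to X'$, which satisfies $P_X \circ (\tilde g \circ \tilde f) = (g \circ f) \circ P_X$. Choosing a homotopy $H : (g\circ f) \simeq \mathrm{id}_X$ and composing with $P_X \times \mathrm{id}$ on $X' \times [0,1]$ produces a homotopy of maps $X' \to X$ from $(g\circ f)\circ P_X$ to $P_X$. Since the covering $P_X$ is a Hurewicz fibration it enjoys the homotopy lifting property, so I can lift this homotopy starting from $\tilde g \circ \tilde f$; its terminal map $\phi : X' \to X'$ then satisfies $P_X \circ \phi = P_X$. The decisive point is that every self-map of the universal covering $X'$ lying over the identity of $X$ is a deck transformation: by transitivity of the deck group on the fibre of the regular universal covering one finds a deck transformation agreeing with $\phi$ at a single point, and the unique lifting theorem (valid because $X'$ is connected) then forces equality on all of $X'$. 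Hence $\phi$ is a homeomorphism and $\tilde g \circ \tilde f \simeq \phi$. The same argument with the roles of $X$ and $Y$ interchanged yields $\tilde f \circ \tilde g \simeq \psi$ for some deck transformation $\psi$ of $P_Y$, again a homeomorphism.

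Finally I would conclude formally, avoiding any two-out-of-six machinery. From $\tilde g \circ \tilde f \simeq \phi$ I get $\phi^{-1} \circ \tilde g \circ \tilde f \simeq \mathrm{id}_{X'}$, so $\phi^{-1} \circ \tilde g$ is a homotopy left inverse of $\tilde f$; from $\tilde f \circ \tilde g \simeq \psi$ I get $\tilde f \circ (\tilde g \circ \psi^{-1}) \simeq \mathrm{id}_{Y'}$, so $\tilde g \circ \psi^{-1}$ is a homotopy right inverse of $\tilde f$. A map possessing both a homotopy left inverse and a homotopy right inverse is a homotopy equivalence (and the two one-sided inverses are then homotopic), whence $X'$ is homotopy equivalent to $Y'$.

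I expect the main obstacle to be the identification of the endpoint $\phi$ of the lifted homotopy as a deck transformation rather than merely a map lying over the identity: this is exactly where the special structure of the universal covering (its regularity and the unique lifting property) is essential, and it is what allows me to dispense with any delicate basepoint bookkeeping while still reaching the homotopy-equivalence conclusion. A secondary point demanding care is checking that the local path connectedness required by the lifting and unique-lifting theorems genuinely propagates to $X'$ and $Y'$, which is ensured by $P_X$ and $P_Y$ being local homeomorphisms.
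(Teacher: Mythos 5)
Your proof is correct and follows essentially the same route as the paper's: lift $f$ and a homotopy inverse to the universal coverings, lift the homotopies, and use normality of the universal covering (the paper's Lemma \ref{lemma_covtransf}) to recognize the discrepancy from the identity as a deck transformation, concluding from the existence of one-sided homotopy inverses. The only cosmetic difference is that you lift each homotopy with prescribed initial map via the homotopy lifting property, so a single deck transformation appears as the terminal map, whereas the paper lifts the homotopies freely (using simple connectedness of the domain) and then corrects both endpoints by deck transformations.
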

The proof relies on the following standard fact. Recall that a covering projection $P:X' \to X$ between path connected and locally path connected spaces is said to be normal if the image $P_{\ast}(\pi_1(X',x'))$ of the fundamental group of $X'$ is normal in the fundamental group $\pi_1(X,P(x'))$ of $X$. This condition is trivially satisfied if $X'$ is simply connected.
\begin{lemma}\label{lemma_covtransf}
Let $P:X' \to X$ be a normal covering between  connected, locally path connected and Hausdorff spaces. Let $f:X\to X$ be a continuous map. Let $f',g':X' \to X'$ be continuous liftings of $f$, that is $P \circ f' = f\circ P = P \circ g'$. Then, there exists a covering transformation $\tau \in \mathrm{Deck}(P)$ such that $\tau \circ f' = g'$. 
\end{lemma}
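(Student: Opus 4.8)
The plan is to reduce the statement to two standard facts from covering space theory: the transitivity of the deck transformation group on the fibres of a normal covering, and the unique lifting property over a connected domain. Write $h := f \circ P : X' \to X$. The hypothesis $P \circ f' = f \circ P = P \circ g'$ says precisely that both $f'$ and $g'$ are lifts of the single map $h$ through the covering $P$. The key observation is that, for any covering transformation $\tau \in \mathrm{Deck}(P)$, the composite $\tau \circ f'$ is again a lift of $h$: indeed $P \circ \tau = P$ by the definition of a deck transformation, so $P \circ (\tau \circ f') = P \circ f' = h$. Hence it suffices to produce a single $\tau$ for which $\tau \circ f'$ and $g'$ agree at one point.

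First I would fix a base point $x_{0}' \in X'$ and compare $f'(x_{0}')$ with $g'(x_{0}')$. Since $P(f'(x_{0}')) = f(P(x_{0}')) = P(g'(x_{0}'))$, these two points lie in the same fibre of $P$. Because $P$ is a \emph{normal} covering between connected, locally path connected spaces, the group $\mathrm{Deck}(P)$ acts transitively on each fibre: applying the lifting criterion to $P$ viewed as a map out of $X'$, one lifts $P$ through $P$ sending $f'(x_{0}')$ to $g'(x_{0}')$ precisely because $P_{\ast}\pi_1(X', f'(x_{0}'))$ and $P_{\ast}\pi_1(X', g'(x_{0}'))$, a priori conjugate, coincide by normality, so the required inclusion of subgroups holds in both directions and the two resulting lifts are mutually inverse homeomorphisms. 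Thus there exists $\tau \in \mathrm{Deck}(P)$ with $\tau(f'(x_{0}')) = g'(x_{0}')$.

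With this $\tau$ fixed, both $\tau \circ f'$ and $g'$ are lifts of $h$ that agree at $x_{0}'$. Since the domain $X'$ is connected, the unique lifting property for $P$ forces $\tau \circ f' = g'$ on all of $X'$: the locus $\{x' : (\tau \circ f')(x') = g'(x')\}$ is nonempty, and it is both open and closed, the latter because two distinct sheets over an evenly covered neighbourhood are disjoint while $P$ restricted to a single sheet is a homeomorphism. This completes the argument.

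The step I expect to be the crux is the transitivity of $\mathrm{Deck}(P)$ on the fibres, as this is the only place where the normality hypothesis is genuinely invoked and where one needs the lifting criterion, hence the local path connectedness of the spaces. Everything else is formal and uses only the connectedness of $X'$; in particular the Hausdorff assumption is not essential to the disjointness-of-sheets argument and is needed only to keep the ambient covering space theory and the cited lifting statements cleanly applicable.
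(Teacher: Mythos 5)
Your proof is correct and complete. The paper offers no proof of this lemma at all---it is invoked as a standard fact of covering space theory---and your argument (transitivity of $\mathrm{Deck}(P)$ on each fibre, which is exactly where normality and local path connectedness enter via the lifting criterion, followed by the unique lifting property over the connected domain $X'$) is precisely the standard one the authors implicitly rely on; your closing observation that the Hausdorff hypothesis is inessential to the open-and-closed argument is also accurate.
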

We are now in the position to give the proof of Lemma \ref{lemma_equivcov}. In all that follows, given a homotopy $H:X \times I \to Y$, we shall use the notation $H_{t}:= H(\cdot,t):X \to Y$.
\begin{proof}[Proof of Lemma \ref{lemma_equivcov}]
Let $f:X \to Y$ be a homotopy equivalence. This means that there exist continuous maps $g,h:Y \to X$ such that $g \circ f \simeq \mathbf{1}_X$ via a homotopy $H:X\times I \to X$ and $f \circ h \simeq \mathbf{1}_Y$ via a homotopy $K: Y \times I \to Y$. Choose any liftings
\begin{itemize}
\item $f':X' \to Y'$ of  $(f\circ P_X):X' \to Y$ with respect to the covering projection $P_Y$
\item $g',h':Y' \to X'$ of $(g\circ P_Y), (h\circ P_Y) : Y' \to X$ with respect to the covering projection $P_X$
\item $H':X'\times I \to X'$ of 
$H \circ (P_X \times \mathbf{1}_I):X' \times I \to X$ with respect to $P_X$
\item $K':Y' \times I \to Y'$ of $K \circ (P_Y \times \mathbf{1}_I):Y' \times I \to Y$ with respect to $P_Y$.
\end{itemize}
This is possible, without any restriction on the maps, because each of the domain spaces is simply connected.\\
Now, since
\[
P_X \circ H'_{0} = P_X \circ (g' \circ f')
\]
and since the universal covering is normal, by Lemma \ref{lemma_covtransf} there exists a covering transformation $\tau_1\in \mathrm{Deck}(P_X)$ such that
\[
\tau_1 \circ H'_{0} = g'\circ f'.
\]
Similarly, since
\[
P_X \circ H'_{1} = P_X \circ \mathbf{1}_{X'}
\]
we find $\tau_2 \in \mathrm{Deck}(P_X)$ such that
\[
\tau_2 \circ H'_{1} = \mathbf{1}_{X'}.
\]
Let $H''=\tau_2 \circ H'$ and $g'' = \tau_2\circ \tau_1^{-1}\circ g'$. Then, it follows from the above equations that the homotopy $H''$ realizes the equivalence
\begin{equation}\label{equivcov1}
g'' \circ f' \simeq \mathbf{1}_{X'}.
\end{equation}
Arguing in a similar way, we obtain the existence of $\xi_1,\xi_2 \in \mathrm{Deck}(P_Y)$ such that the homotopy $K''=\xi_2 \circ K'$ realizes the equivalence 
\[
(\xi_2 \circ \xi_1^{-1} \circ f') \circ h' \simeq \mathbf{1}_{Y'}.
\]
Define $h'' = h' \circ (\xi_1 \circ \xi_2 ^{-1})$ and the new homotopy
\[
K''' =( \xi_1 \circ \xi_2^{-1}) \circ K'' \circ (\xi_1 \circ \xi_2 ^{-1} \times \mathbf{1}_I).
\]
Then, it is straightforward to verify that $K'''$ realizes the equivalence
\begin{equation}\label{equivcov2}
f' \circ h'' \simeq \mathbf{1}_{Y'}.
\end{equation}
From \eqref{equivcov1} and \eqref{equivcov2} we conclude that $f'$ is a homotopy equivalence between $X'$ and $Y'$.
\end{proof}
The proof of Proposition \ref{prop-lift} is completed.
\end{proof}

We now verify that a Hurewicz fibration always lifts to a Hurewicz fibration between universal covering spaces. According to the previous Lemma, the lifted fibration enjoys some good properties of the base fibration.
\begin{proposition}\label{prop-lift-hur}
Let $\pi:E \to B$ be a Hurewicz fibration between connected, locally path connected and semi-locally simply connected spaces with the homotopy type of a CW-complex.  Let $P_E: E' \to E$ and $P_B:B' \to B$ denote the universal covering maps of $E$ and $B$ respectively. Then:

\begin{enumerate}
\item [(a)] There exists a Hurewicz fibration $\pi':E' \to B'$ such that  $P_B \circ \pi' = \pi \circ P_E$.
\item [(b)] $E'$ and the fibres $F'$ have the homotopy type of a CW-compex. Moreover, if $E$ is a separable metric space then also $E'$ and $F'$ are separable metric spaces.
\item [(c)] Let  $F_{0} = \pi^{-1}(b_0)$ and $F'_{0}=(\pi')^{-1}(b'_0)$ with $P_B(b'_0) = b_0$. If $E'$ is contractible, then $F'_{0}$ is exactly one of the connected components of $P_E^{-1}(F_{0})$. In particular:
	\begin{enumerate}
	\item [(c.1)] If $F_{0}$ is locally contractible, so is $F'_{0}$.
	\item [(c.2)] If $F_{0}$ is a finite dimensional CW-complex then the same holds for $F'_{0}$.
	\end{enumerate}
\item [(d)] If $E$ is a separable metric space with finite covering dimension then $E'$ and $F'$ have the same properties.
\end{enumerate}
\end{proposition}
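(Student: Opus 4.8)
The plan is to treat the four parts in turn, leaning on Proposition \ref{prop-lift} and on the lifting theory of covering spaces.

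\emph{Part (a).} First I would produce $\pi'$ as a lift. Since $E'$ is the universal covering of the locally path connected space $E$, it is simply connected and locally path connected, so the composite $\pi \circ P_E : E' \to B$ satisfies the lifting criterion $(\pi \circ P_E)_\ast \pi_1(E') = 0 \subseteq (P_B)_\ast \pi_1(B')$ for the covering $P_B$ and lifts, after fixing compatible base points, to a unique continuous $\pi':E' \to B'$ with $P_B \circ \pi' = \pi \circ P_E$. To show $\pi'$ is a Hurewicz fibration I would combine two facts: the composite $P_B \circ \pi' = \pi \circ P_E$ is a Hurewicz fibration, being a composition of the covering $P_E$ (a fibration by \cite[Theorem II.2.3]{Sp}) with $\pi$; and $P_B$, being a covering, has unique lifting of maps from connected spaces. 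Concretely, given a test homotopy $G:Z\times I \to B'$ and an initial lift $\tilde g_0$ with $\pi'\tilde g_0 = G_0$, I would lift $P_B\circ G$ through the fibration $P_B\circ\pi'$ starting at $\tilde g_0$, obtaining $\tilde G:Z\times I\to E'$. Then $\pi'\circ\tilde G$ and $G$ are two lifts of $P_B\circ G$ through $P_B$ agreeing on $Z\times\{0\}$; restricting to each connected slice $\{z\}\times I$ and invoking uniqueness of lifts through $P_B$ forces $\pi'\circ\tilde G = G$, so $\tilde G$ is the required lift.

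\emph{Part (b).} That $E'$ has the homotopy type of a CW-complex is Proposition \ref{prop-lift}(b) applied to $P_E$, and likewise for $B'$. Since $B'$ is path connected and $\pi'$ is a Hurewicz fibration whose total space and base are of CW homotopy type, the fibre $F'$ has the homotopy type of a CW-complex: the fibre of a Hurewicz fibration is homotopy equivalent to its homotopy fibre, and the homotopy fibre of a map between spaces of CW homotopy type is again of CW homotopy type (Milnor; cf. \cite{FrPi-book}). For the metric statement, if $E$ is a separable metric space then Proposition \ref{prop-lift}(e) gives that $E'$ is separable and metrizable, and $F'\subseteq E'$ inherits both properties as a subspace.

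\emph{Part (c).} The key identity is $P_E^{-1}(F_0) = (\pi\circ P_E)^{-1}(b_0) = (\pi')^{-1}(P_B^{-1}(b_0))$. Since $P_B^{-1}(b_0)$ is discrete and closed in $B'$, each fibre $(\pi')^{-1}(b')$ with $b'\in P_B^{-1}(b_0)$ is open and closed in $P_E^{-1}(F_0)$, hence a union of connected components; in particular so is $F'_0$. To show $F'_0$ is a single component I would use the homotopy exact sequence of $\pi'$: in the segment $\pi_1(B') \to \pi_0(F'_0) \to \pi_0(E')$ the group $\pi_1(B')$ is trivial ($B'$ simply connected) and $\pi_0(E')$ is a point ($E'$ contractible, in particular path connected), forcing $\pi_0(F'_0)$ to be a point. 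For (c.1) I would note that $P_E$ is a local homeomorphism, so $P_E|_{F'_0}$ is a local homeomorphism onto an open subset of $F_0$, and local contractibility, being local, transfers to $F'_0$. For (c.2), restricting the covering $P_E$ over the connected component $C$ of $F_0$ containing $P_E(F'_0)$ exhibits $P_E|_{F'_0}:F'_0\to C$ as a covering of a finite dimensional CW-complex, whence $F'_0$ is itself a finite dimensional CW-complex by Proposition \ref{prop-lift}(a).

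\emph{Part (d).} Since $E$ is a regular $T_1$ space with $\dim E<\infty$ and, for separable metric spaces, covering dimension coincides with the small inductive dimension, Proposition \ref{prop-lift}(c) yields $\ind E' \le \ind E <\infty$; as $E'$ is separable metric by Proposition \ref{prop-lift}(e), we recover $\dim E' = \ind E'<\infty$. The subspace $F'$ is again separable metric and, by monotonicity of covering dimension on subspaces of separable metric spaces, $\dim F' \le \dim E'<\infty$. I expect the main obstacle to be the fibration descent in part (a)—verifying the homotopy lifting property for $\pi'$ by correctly interleaving lifting through the composite fibration $\pi\circ P_E$ with uniqueness of lifts through the covering $P_B$—together with the homotopy-type statement for the fibre in part (b), which hinges on the nontrivial fact that homotopy fibres of maps between CW-type spaces are again of CW type.
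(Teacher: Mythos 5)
Your proof is correct and, for parts (a), (b) and (d), follows the paper's argument essentially verbatim: the same lift of $\pi\circ P_E$ through $P_B$ via the lifting criterion, the same interleaving of the homotopy lifting property of the composite fibration $\pi\circ P_E$ with uniqueness of lifts through the covering $P_B$, and the same appeals to Proposition \ref{prop-lift} and to the CW homotopy type of fibres of fibrations between CW-type spaces. The one genuine divergence is in part (c): you establish that $F'_0$ is connected by running the tail of the homotopy exact sequence of $\pi'$ (exactness of $\pi_1(B')\to\pi_0(F'_0)\to\pi_0(E')$ with $\pi_1(B')=1$ and $\pi_0(E')=\ast$ forces $\pi_0(F'_0)=\ast$), whereas the paper defers to its Proposition \ref{prop-contrfibre}(a), which first builds a weak homotopy equivalence $F'_0\to\Omega_{b'_0}B'$ and reads off $\pi_0$ from that. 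Your route is more economical, using only that $E'$ is path connected rather than contractible and avoiding the loop-space machinery; the paper's version costs nothing globally since Proposition \ref{prop-contrfibre} is needed later anyway. Two small points worth tightening. In (c.1), the assertion that $P_E|_{F'_0}$ is a local homeomorphism onto an \emph{open} subset of $F_0$ tacitly uses that the component $F'_0$ is open in $P_E^{-1}(F_0)$; this holds, but only because $P_E^{-1}(F_0)$ is locally contractible (hence locally connected) as the total space of a covering of the locally contractible $F_0$ --- the paper makes this intermediate step explicit. In (c.2), restricting the covering to the component $C$ of $F_0$ relies on $C$ being open in $F_0$ and on the fact that a component of a covering space over a connected, locally path connected base is again a covering of the base; both are fine for CW-complexes, and the paper's variant (apply Proposition \ref{prop-lift}(a) to all of $P_E^{-1}(F_0)\to F_0$ and take $F'_0$ as a subcomplex) sidesteps this entirely.
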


\begin{proof}
(a) First, we observe that the composition of Hurewicz fibrations is still a Hurewicz fibration and this, in particular, applies to $\pi'' := \pi \circ P_E :E' \to B$. The verification is straightforward from the definition.
Since $E'$ is simply connected, $\pi''$ lifts to  a continuous map $\pi' : E' \to B'$ such that $$P_B \circ \pi' = \pi''.$$ We show that $\pi'$ is a Hurewicz fibration. To this end, let $H:X \times I \to B'$ be a given homotopy, and assume that $\widetilde {H_{0}}: X \to E'$ is a continuous lifting of $H_{0} : X \to B$ with respect to $\pi'$, i.e.,
\[
\pi' \circ \widetilde{H_{0}} = H_{0}.
\]
Without loss of generality, we can assume that $X$ is connected. If not, we argue on each component. Consider the homotopy $$H''  := P_B \circ H: X \times I \to B'$$ and observe that $\widetilde{H_{0}}$ is a continuous lifting of $H''_{0}:X \to B$ with respect to $\pi''$. Indeed,
\[
\pi'' \circ \widetilde{H_{0}} = P_B \circ (\pi' \circ \widetilde{H_{0}}) = P_B \circ H_{0} = H''_{0}.
\] 
Since $\pi''$ is a Hurewicz fibration, there exists a continuous lifting
\[
\tilde H : X\times I \to E'
\]
 of $H''$ with respect to $\pi''$ such that
 \[
 \tilde H(x,0) = \widetilde{H_{0}}(x)\text{, }\forall x\in X.
 \]
 Note that, in particular,
\[
H'' = \pi'' \circ  \tilde H = P_B \circ (\pi' \circ \tilde H),
\]
namely, $(\pi' \circ \tilde H) : X \times I \to B'$ is  the unique continuous lifting of $H''$ with respect to the covering projection $P_B$ satisfying
\[
(\pi' \circ \tilde H)(x,0) = (\pi' \circ \widetilde{H_{0}})(x) = H_{0}(x).
\]
Since, by definition of $H''$, the map $H:X\times I \to B'$ has the same property we must conclude
\[
\pi' \circ \tilde H = H.
\]
Summarizing, we have proved that $\tilde H$ is a continuous lifting of $H$ with respect to $\pi'$ and satisfies $\tilde H (x,0) = \widetilde{H_{0}}(x)$.\\

(b) By Proposition \ref{prop-lift} (b), both $E'$ and $B'$ have the homotopy type of a CW-complex. Then, by \cite[Proposition 5.4.1]{FrPi-book}, $F'$ has the homotopy type of a CW-complex. Moreover, by (e) of Proposition \ref{prop-lift}, $E'$ is separable and metrizable. The same holds for the subspace $F'$ of $E'$.\\

(c) We shall see in Proposition \ref{prop-contrfibre} below that $F'_{0}$ is path connected. Note that $F'_{0}\subseteq P_E^{-1}(F_{0})$ and $\pi'(P_E^{-1}(F_{0})) \subseteq P_B^{-1}(b_0)$. Since $P_B^{-1}(b_0)$ is a discrete space, the continuous map $\pi'|_{P_{E}^{-1}(F_{0})}$ is constant on the connected components of $P_E^{-1}(F_{0})$. Let $C$  be a component of $P_E^{-1}(F_{0})$ such that $C \cap F'_{0} \not= \emptyset$. Clearly $C \supseteq F'_{0}$ because $F'_{0}$ is connected also as a subspace of $F_{0}$. Since $\pi'(F'_{0})=b'_0$ then, necessarily, we have $\pi'(C) = b'_0$. Thus $C= F'_{0}$.\\
(c.1) Now, assume that $F_{0}$ is locally contractible. Since
\[
P_{F_{0}} = P_E|_{P_E^{-1}(F_{0})}: P_E^{-1}(F_{0}) \to F_{0}
\]
is a covering projection, hence a local homeomorphism, we have that also $P_E^{-1}(F_{0})$ is locally contractible. In particular, this space is locally path connected. It follows that the connected component $C=F'_{0}$ of $P_E^{-1}(F_{0})$ is an open subset and, therefore, $F'_{0}$ inherits the local contractibility of $P_{E}^{-1}(F_{0})$. \\
 (c.2) Finally, suppose that $F_{0}$ is a finite-dimensional CW-complex. Then, using again that $P_{F_{0}}$ is a covering projection, by Proposition \ref{prop-lift} (a) we have that $P_{E}^{-1}(F_{0})$ is a finite dimensional CW-complex. To conclude, recall that $F'_{0}$ is a connected component of $P_{E}^{-1}(F_{0})$, hence a CW-subcomplex.\\

(d) Recall that for a separable, metrizable space the small inductive dimension coincides with the (Lebesgue) covering dimension, \cite{En-dim}. Therefore, we can apply Proposition \ref{prop-lift} (c) and conclude that, if $\dim E <+\infty$ then $\dim E', \dim F'<+\infty$.
\end{proof}

\subsection{The loop space of the base space}
We now consider the path fibration $\varepsilon:\mathcal{P}_{x_{0}}(X)  \rightarrow X$
where
\[
\mathcal{P}_{x_{0}}(X)  =\left\{  \gamma:[0,1]\rightarrow X:\gamma\text{
continuous and }\gamma\left(  0\right)  =x_{0}\right\}
\]
is endowed with the compact-open topology and $\varepsilon$ is the end-point
map:
\[
\varepsilon\left(  \gamma\right)  =\gamma\left(  1\right).
\]
It is a Hurewicz fibration and the fibre $F=\varepsilon^{-1}(x_0)$ is the loop space of $X$ based at $x_{0}$:
\[
\varepsilon^{-1}\left(  x_{0}\right)  =\Omega_{x_{0}}X.
\]
Sometimes, when there is no danger of confusion, the base point is omitted from the notation.

\begin{proposition} \label{prop-contrfibre}
Let $\pi:E \to B$ be a Hurewicz fibrations with fibre $F$ and
path connected spaces $E$ and $B$. Assume that $E$, $B$ (hence $F$) all have the homotopy type of a CW-complex.
\begin{enumerate}
\item[(a)] If $E$ is contractible and $B$ is simply connected, then $F=\pi^{-1}(b)$ is path connected and it is homotopy equivalent to the loop space $\Omega_b B$, for any $b\in B$.
\item[(b)] Assume that $B$ admits the universal covering projection $P_B:B' \to B$. Having fixed $b'_{0}\in B'$, let $P_B(b'_{0})  =b_{0}\in B$ and denote by $\widetilde{\Omega_{b_{0}}
B}$ the connected component of $\Omega_{b_{0}}B$ containing the constant loop $c_{b_{0}}$. Then,
there exists a homotopy equivalence $\theta:\Omega_{b'_{0}}B'\rightarrow\widetilde{\Omega_{b_{0}}B}$.
\end{enumerate}
\end{proposition}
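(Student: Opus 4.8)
The plan is to handle (a) by a fibrewise comparison with the path fibration and (b) by the map induced on based loop spaces by the covering projection; in both cases I would first produce a weak homotopy equivalence and then upgrade it to a genuine homotopy equivalence by Whitehead's theorem, which is legitimate precisely because every space involved has the homotopy type of a CW-complex.

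For (a), since $E$ is contractible I would fix a contraction $G : E \times I \to E$ with $G_{0} \equiv e_{0}$ and $G_{1} = \mathbf{1}_{E}$, and set $b_{0} := \pi(e_{0})$. Post-composing with $\pi$ gives a map into the path fibration $\varepsilon : \mathcal{P}_{b_{0}}(B) \to B$ of Section 2.2, namely $\Psi(e)(t) := \pi(G(e,t))$, which is a path from $b_{0}$ to $\pi(e)$ and satisfies $\varepsilon \circ \Psi = \pi$. Thus $(\Psi, \mathbf{1}_{B})$ is a morphism of Hurewicz fibrations whose restriction to the fibre over $b_{0}$ is a map $\psi := \Psi|_{F} : F \to \Omega_{b_{0}} B$. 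Because both total spaces $E$ and $\mathcal{P}_{b_{0}}(B)$ are contractible, comparing the two long exact sequences of homotopy groups through $(\Psi,\mathbf{1}_{B})$ and applying the five lemma shows that $\psi$ is an isomorphism on $\pi_{k}$ for every $k \geq 1$; the tail of the sequence, together with $\pi_{1}(B) = 0$, shows that $F$ and $\Omega_{b_{0}}B$ are path connected, so $\psi$ is a weak equivalence. Since $F$ has CW-homotopy type by hypothesis and $\Omega_{b_{0}}B$ does as well (it is the fibre, over the CW-type base $B$, of the path fibration whose total space is contractible, cf.\ \cite[Proposition 5.4.1]{FrPi-book}), Whitehead's theorem turns $\psi$ into a homotopy equivalence. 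As all fibres of $\pi$ over the path-connected base are mutually homotopy equivalent, and likewise all the $\Omega_{b}B$, the conclusion follows for every $b$.

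For (b), the natural candidate is $\theta := (P_{B})_{\ast} : \Omega_{b'_{0}} B' \to \Omega_{b_{0}} B$, $\theta(\gamma') = P_{B} \circ \gamma'$, which is continuous for the compact-open topologies and sends $c_{b'_{0}}$ to $c_{b_{0}}$. Since $B'$ is simply connected, $\pi_{0}(\Omega_{b'_{0}}B') = \pi_{1}(B', b'_{0}) = 0$, so $\Omega_{b'_{0}}B'$ is path connected; its image under $\theta$ is therefore connected and contains $c_{b_{0}}$, whence $\theta$ factors through $\widetilde{\Omega_{b_{0}}B}$. To see that $\theta$ is a weak equivalence I would use that the covering projection $P_{B}$ induces isomorphisms $\pi_{n}(B') \cong \pi_{n}(B)$ for all $n \geq 2$: under the identifications $\pi_{k}(\Omega_{b'_{0}}B') \cong \pi_{k+1}(B')$ and $\pi_{k}(\widetilde{\Omega_{b_{0}}B}) \cong \pi_{k+1}(B)$ (valid for $k \geq 1$, as higher homotopy groups see only the path component) the map $\theta_{\ast}$ becomes $(P_{B})_{\ast}$ in degree $k+1 \geq 2$, hence an isomorphism, while in degree $0$ both spaces are path connected. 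Whitehead's theorem then yields the homotopy equivalence, provided both spaces have CW-homotopy type.

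The hard part is exactly this last proviso, i.e.\ the bookkeeping needed to apply Whitehead's theorem. That $\Omega_{b'_{0}}B'$ has CW-homotopy type follows as in (a) once one knows, via Proposition \ref{prop-lift}(b), that the universal cover $B'$ has CW-homotopy type; the more delicate point is that the single path component $\widetilde{\Omega_{b_{0}}B}$ of $\Omega_{b_{0}}B$ again has CW-homotopy type, which I would justify by noting that a homotopy equivalence $\Omega_{b_{0}}B \simeq K$ with $K$ a CW-complex restricts to a homotopy equivalence between corresponding path components, and path components of a CW-complex are subcomplexes. A secondary subtlety, in (a), is the careful identification of $\psi$ with the connecting homomorphism so that the five lemma genuinely applies; alternatively, in (b) one can bypass Whitehead altogether by checking that unique path lifting makes $\theta$ a homeomorphism onto $\widetilde{\Omega_{b_{0}}B}$, the only nontrivial point there being the continuity of the lifting map in the compact-open topology.
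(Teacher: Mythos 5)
Your proof is correct, and while it follows the paper's overall skeleton (produce a weak homotopy equivalence, then upgrade via Whitehead using the CW-homotopy-type hypotheses), the comparison maps you build are genuinely different in both parts. In (a) the paper simply quotes \cite[Proposition 4.66]{Ha-book} for the weak equivalence $F\to\Omega_{b_0}B$ and \cite[Corollary 3]{Mi-TAMS} for the CW-type of the loop space; you instead unpack that citation into an explicit morphism of fibrations $\Psi(e)(t)=\pi(G(e,t))$ and a five-lemma argument, which is the same mathematics made self-contained. The real divergence is in (b): you take $\theta$ to be honest post-composition $\gamma'\mapsto P_B\circ\gamma'$, which is already a fibre-preserving map over $P_B$ of the two path fibrations (since $\varepsilon(P_B\circ\gamma')=P_B(\varepsilon'(\gamma'))$), whereas the paper manufactures $\Theta:\mathcal{P}_{b'_0}(B')\to\mathcal{P}_{b_0}(B)$ by contracting $\mathcal{P}_{b'_0}(B')$ and lifting the resulting homotopy through $\varepsilon$. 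Your choice is simpler and makes the identification of $\theta_*$ with $(P_B)_*$ under $\pi_k(\Omega X)\cong\pi_{k+1}(X)$ immediate, at the price of nothing; both then finish identically by comparing the exact sequences of the path fibrations. Two further points in your favour: you justify explicitly that the single component $\widetilde{\Omega_{b_0}B}$ retains the CW-homotopy type (via restriction of a homotopy equivalence to matching path components), a point the paper asserts without comment, and your closing observation that unique path lifting makes $\theta$ a homeomorphism onto $\widetilde{\Omega_{b_0}B}$ would, once the continuity of the lifting map in the compact-open topology is checked, give a strictly stronger conclusion than the stated homotopy equivalence.
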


\begin{proof}
(a) The loop space $\Omega_{b}B$ of the simply connected space $B$ is path connected. Indeed, a relative homotopy $H : I \times I \to B$ between the constant loop $c_{b} \in \Omega_{b}B$ and any other loop $\gamma \in \Omega_{b}B$ gives rise to the continuous path $\Gamma: I \to \Omega_{b }B$, $\Gamma(s) = H(\ast,s)$, connecting $c_{b}$ to $\gamma$. Moreover, by \cite[Corollary 3]{Mi-TAMS}, $\Omega_b B$ has the homotopy type of a CW-complex.

Now, since $B$ is path connected then the path space $\mathcal{P}_{b}(B)$ is contractible, \cite[Lemma 3 on p. 75]{Sp}. If follows from  \cite[Proposition 4.66]{Ha-book} that there is a weak
homotopy equivalence $h:F\rightarrow\Omega_{b} B$. In particular, since $h_{\ast}: \pi_{0}(F) \to \pi_{0}(\Omega_{b}B)$ is bijective, then $F$ must be connected. Summarizing, $h$ is a weak homotopy equivalence between connected spaces with the homotopy type of a CW-complex. Therefore, we can apply the classical theorem by J. H. C. Whitehead and conclude that $h$ is a genuine homotopy equivalence, \cite[Theorem 4.5]{Ha-book}.
\smallskip

\noindent (b) Consider the following diagram of fibrations\\

\hspace{4.2cm}
\xymatrix{
B' \ar@{->}[d]_{P_B} \ar@{<-}[r]^{\varepsilon^{\prime}}
&  \mathcal{P}_{b'_{0}}(B^{\prime}) \\ 
B \ar@{<-}[r]_{\varepsilon} &\mathcal{P}_{b_{0}}(B)}\\[3mm]
Since the path space $\mathcal{P}_{b'_{0}}(B')$ is contractible to the
constant loop $c_{b_{0}^{\prime}}$, the continuous map
\[
P_B^{\prime}=P_B\circ\varepsilon^{\prime}:\mathcal{P}_{b'_{0}}(B')\rightarrow B
\]
is homotopic to the constant map $f_{b_{0}} \equiv b_{0}:\mathcal{P}_{b'_{0}}(B')\rightarrow B$. Let $H:\mathcal{P}_{b'_{0}}(B')\times\lbrack0,1]\rightarrow B$ be a homotopy between $f_{b_{0}}$ and
$P_B^{\prime}$ such that $H\left(  \cdot,0\right)  =f_{b_{0}}$ and consider
the constant map $f_{b_{0}}^{\prime}\equiv c_{b_{0}}:\mathcal{P}_{b'_{0}}(B')\rightarrow \mathcal{P}_{b_{0}}(B)  $. Clearly $f_{b_{0}
}^{\prime}$ is a lifting of $f_{b_{0}}$ with respect to $\varepsilon$. Then,
by the homotopy lifting property of the fibration $\varepsilon:\mathcal{P}_{b_{0}}(
B)  \rightarrow B$, there exists a unique lifting $H^{\prime
}:\mathcal{P}_{b'_{0}}(B')\times\lbrack0,1]\rightarrow \mathcal{P}_{b_{0}}(B)$ of $H$
such that $H^{\prime}\left(  \cdot,0\right)  =f_{b_{0}}^{\prime}$. Define
\[
\Theta=H^{\prime}\left(  \cdot,1\right)  :\mathcal{P}_{b'_{0}}(B')\rightarrow \mathcal{P}_{b_{0}}(B)  .
\]
Since, by construction, $\varepsilon\circ H^{\prime}=H$ we have that
$\varepsilon\circ\Theta=P_B^{\prime}$ and the following diagram commutes:

\hspace{4.2cm}
\xymatrix{
B' \ar@{->}[d]_{P_B} \ar@{<-}[r]^{\varepsilon^{\prime}}
&  \mathcal{P}_{b'_{0}}(B^{\prime}) \ar@{->}[d]^\Theta \ar@{->}[ld]^{P'_B}\\
B \ar@{<-}[r]_{\varepsilon} &\mathcal{P}_{b_{0}}(B)}\\[3mm]

Now, define $\theta:= \Theta|_{\Omega_{b'_{0}}B'}:\Omega_{b_{0}^{\prime}}B^{\prime}\rightarrow \mathcal{P}_{b_{0}}(B)$. We claim that, actually,
\[
\theta:\Omega_{b_{0}^{\prime}}B^{\prime}\rightarrow\widetilde{\Omega_{b_{0}}
B} \subseteq \Omega_{b_{0}}B.
\]
Indeed, for any $\gamma^{\prime}\in\Omega_{b_{0}^{\prime}}B^{\prime}$ we
have
\[
\varepsilon\circ\Theta\circ\gamma^{\prime}=P_B^{\prime}\circ\gamma^{\prime
}=P_B\circ\varepsilon^{\prime}\circ\gamma^{\prime}=P_B\left(  b_{0}^{\prime
}\right)  =b_{0},
\]
proving that the end-point of $\Theta\circ\gamma^{\prime}$ is $b_{0}$. On the
other hand, by definition, $\Theta$ takes values in $\mathcal{P}_{b_{0}}(B)
$, thus $\Theta\circ\gamma^{\prime}$ is a path issuing from $b_{0}$.
Summarizing $\Theta\circ\gamma^{\prime}\in\Omega_{b_{0}}B$, i.e.,
$\theta(\Omega_{b_{0}^{\prime}}B^{\prime})\subseteq\Omega_{b_{0}}B$. Actually,
since $B^{\prime}$ is simply connected then $\Omega_{b_{0}^{\prime}}B^{\prime}$
is path connected and $\theta(\varepsilon_{b_{0}^{\prime}})=\varepsilon
_{b_{0}}$. It follows by the continuity of $\theta$ that $\theta(\Omega
_{b_{0}^{\prime}}B^{\prime})\subseteq\widetilde{\Omega_{b_{0}}B}$, as claimed.

We have thus obtained the following commutative diagram

\begin{eqnarray}
\xymatrix{
B' \ar@{->}[d]_{P_B} \ar@{<-}[r]^{\varepsilon^{\prime}}
&  \mathcal{P}_{b'_{0}}(B^{\prime}) \ar@{->}[d]^\Theta &\Omega_{b_{0}^{\prime}}B^{\prime} \ar@{_{(}->}[l]_{\quad\ i'} 
\ar@{->}[d]^\theta \\
B \ar@{<-}[r]_{\varepsilon} &\mathcal{P}_{b_{0}}(B)  \ar@{->}[l] \ar@{<-^{)}}[r]_{\quad\ i}  &  \Omega_{b_{0}}B}\label{diagram_pathcov}
\end{eqnarray}
where $i:\Omega_{b_{0}}B\hookrightarrow \mathcal{P}_{b_{0}}(B)$ and
$i^{\prime}:\Omega_{b_{0}^{\prime}}B^{\prime}\hookrightarrow \mathcal{P}_{b'_{0}}(B^{\prime
})$ are the inclusion maps and each row is a fibration.
Consider the associated homotopy sequences with $j\geq1$ and any $\gamma' \in \Omega'_{b'_{0}}B'$, \cite[Theorem 4.41]{Ha-book}:\\
\xymatrix{
1\ar@{<->}[d]|{\shortparallel} & & & 1\ar@{<->}[d]|{\shortparallel}\\
\pi_{j}(\mathcal{P}B^{\prime},\gamma') \ar@{<-}[r] \ar@{<->}[d]|{\shortparallel} & \pi_{j}(\Omega B',\gamma')\ar[d]^{\theta_*} \ar@{<-}[r]^{\ \Delta'_{j+1}} & \pi_{j+1}(B',b'_{0}) \ar[d]^{(P_B)_*} \ar@{<-}[r]  &  \pi_{j+1}(\mathcal{P}B^{\prime},\gamma') \ar@{<->}[d]|{\shortparallel}\\
\pi_{j}(\mathcal{P}B, \Theta(\gamma')) \ar@{<-}[r]  \ar@{<->}[d]|{\shortparallel}& \pi_{j}(\Omega B, \theta(\gamma')) \ar@{<-}[r]_{\quad \Delta_{j+1}} 
\ar@{<->}[d]|{\shortparallel}&
\pi_{j+1}(B,b_{0})  \ar@{<-}[r] & \pi_{j+1}(\mathcal{P}B',\Theta(\gamma')) \ar@{<->}[d]|{\shortparallel} \\
1 & \pi_{j}(\widetilde{\Omega B}, \theta(\gamma')) & & 1
}\\[0.5cm]

Each row is exact. Moreover, $\pi_{j}\left(  \mathcal{P}B^{\prime}\right)  =1=\pi
_{j}\left(  \mathcal{P}B\right)  $ because these spaces are contractible. It follows
that both $\Delta_{j+1}^{\prime}$ and $\Delta_{j+1}$ are isomorphisms. On the
other hand, from the homotopy exact \ sequence of the covering $P_B:B^{\prime
}\rightarrow B$ we see that $(P_B)_{\ast}:\pi_{j+1}\left(  B^{\prime}
,x_{0}^{\prime}\right)  \rightarrow \pi_{j+1}\left(  B,x_{0}\right)  $ is an
isomorphism for every $j\geq1$. Therefore, we conclude that
\[
\theta_{\ast}=\Delta_{j+1}\circ (P_B)_{\ast}\circ (\Delta'_{j+1})^{-1}: \pi_{j}(\Omega B',\gamma') \to \pi_{j}(\widetilde{\Omega B},\theta(\gamma'))
\]
is an isomorphism for every $j\geq1$ as a composition of isomorphisms.
Finally, since both $\widetilde{\Omega B}$ and $\Omega B^{\prime}$ are path
connected, $\theta_{\ast}$ is in fact an \textquotedblleft
isomorphism\textquotedblright\ also for $j=0$. In conclusion, $\theta$ is a
weak homotopy equivalence of $\widetilde{\Omega B}$ and $\Omega B^{\prime}$.
Since these spaces have the homotopy type of a $CW$-complex, by the theorem of
Whitehead, $\theta$ is a genuine homotopy equivalence.
\end{proof}

\subsection{Singular homology and dimension}
It is well known that the covering dimension dominates the \v{C}ech cohomological dimension of a paracompact space. By the abstract de Rham isomorphism and by duality we therefore deduce the following

\begin{proposition}\label{prop-cech}
Let $X$ be a topological space with the homotopy type of a locally contractible, paracompact space $Y$ of finite covering dimension $\dim Y <+\infty$. Then, for any field $\kk$, the singular homology of $X$ with coefficients in $\kk$ satisfies
\begin{equation}\label{homologyfibre}
H_k(X;\kk) = 0 \text{, } \forall k > \dim Y.
\end{equation}
\end{proposition}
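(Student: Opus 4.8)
The plan is to prove the vanishing first for $Y$ and then transport it to $X$, translating the problem from singular homology into sheaf/\v{C}ech cohomology, where a dimension bound is available, and coming back by duality over the field $\kk$. Since $X$ has the homotopy type of $Y$, the homotopy invariance of singular homology gives $H_k(X;\kk)\cong H_k(Y;\kk)$ for every $k$, so it suffices to establish \eqref{homologyfibre} for $Y$ itself. Because $\kk$ is a field, the universal coefficient theorem identifies singular homology and cohomology as dual $\kk$-vector spaces, $H^k(Y;\kk)\cong\mathrm{Hom}_{\kk}(H_k(Y;\kk),\kk)$; in particular $H_k(Y;\kk)=0$ if and only if $H^k(Y;\kk)=0$. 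This reduces the whole statement to showing that the \emph{singular cohomology} $H^k(Y;\kk)$ vanishes for every $k>\dim Y$.

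The next step is the abstract de Rham comparison. Since $Y$ is locally contractible, the singular cochain complex resolves the constant sheaf $\underline{\kk}$, so singular cohomology agrees with the sheaf cohomology $H^k(Y;\underline{\kk})$; and since $Y$ is paracompact, this sheaf cohomology coincides with the \v{C}ech cohomology $\check H^k(Y;\kk)$. Thus $H^k(Y;\kk)\cong\check H^k(Y;\kk)$ for all $k$. The dimension input enters separately: for a paracompact space the covering dimension dominates the sheaf (equivalently \v{C}ech) cohomological dimension, so that $\check H^k(Y;\mathcal F)=0$ for every $k>\dim Y$ and every sheaf of coefficients $\mathcal F$, in particular for $\underline{\kk}$. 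Assembling the three reductions gives $H^k(Y;\kk)=0$, hence $H_k(Y;\kk)=0$, for all $k>\dim Y$, and by homotopy invariance the same holds for $X$.

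The conceptual crux, and the point deserving the most care, is that the two geometric hypotheses play distinct and complementary roles and must each be matched to the correct classical comparison theorem: \emph{local contractibility} is what allows singular cohomology to be identified with sheaf cohomology of the constant sheaf (mere local connectedness is not enough to resolve $\underline{\kk}$ by singular cochains), while \emph{paracompactness} is what guarantees both that sheaf cohomology equals \v{C}ech cohomology and that the covering dimension bounds the cohomological dimension. The main obstacle is therefore not computational but bibliographical and logical: one must invoke the de Rham-type isomorphism (e.g.\ Bredon, \emph{Sheaf Theory}, or Godement) exactly where local contractibility applies, and the dimension inequality $\dim_{\check{}} Y\le\dim Y$ for paracompact spaces (e.g.\ Engelking, \emph{Dimension Theory}) exactly where paracompactness applies, and then close the circle from cohomology back to homology using that a $\kk$-vector space vanishes precisely when its dual does.
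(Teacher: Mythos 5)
Your proposal is correct and follows essentially the same route as the paper: reduce to $Y$ by homotopy invariance, identify singular with sheaf and \v{C}ech cohomology of the constant sheaf using local contractibility and paracompactness, kill $\check H^k$ for $k>\dim Y$ by the bound of covering dimension on cohomological dimension, and return to homology via the universal coefficient theorem over the field $\kk$. The only difference is cosmetic (you perform the duality reduction at the start rather than at the end), so there is nothing further to add.
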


\begin{proof}
Indeed, by the homotopy invariance of the singular homology,
\[
H_{k}(X;\kk) \simeq H_{k}(Y;\kk)\text{  for every }k.
\]
Let $\check{H}^k(Y;\mathcal{K})$ denote the 
\v{C}ech cohomology with coefficients in the constant sheaf $\mathcal{K}$ generated by $\kk$. Since $Y$ is locally contractible and paracompact we have (\cite[Theorem 5.10.1]{Go-topalg}, \cite[Theorem 5.25]{Wa})
\[
\check{H}^k(Y;\mathcal{K}) \simeq \check{H}^k(Y;\kk) \simeq {H}^k(Y;\kk) \simeq H^{k}(Y;\mathcal{K}),
\]
and, moreover, $\check{H}^k(Y;\mathcal{K})=0$ for every $k > \dim Y$, \cite[Section 5.12]{Go-topalg}. Therefore, $H^k(Y;\kk) =0 $ for every $k>\dim Y$. Since we are taking coefficients in a field, by the universal coefficient theorem we have $H^k(Y;\kk) \simeq \mathrm{Hom}(H_k(Y;\kk);\kk)$, \cite[Theorem 53.5]{Mu-elements}, and this implies that $H_{k}(X;\kk) \simeq H_{k}(Y;\kk) = 0$ for every $k > \dim Y$.
\end{proof}

\section{Proof of the main Theorems}
Let us begin with the
\begin{proof}[Proof of Theorem \ref{th-general}]

(a) Let the fibre $F_{0}=\pi^{-1}(y_{0})$ be locally contractible and assume that the total space $X$ of the Hurewicz fibration $\pi:X \to Y$ is aspherical. Since $X$, hence its universal covering space $X'$, has the homotopy type of a CW-complex, this is equivalent to say that $X'$ is contractible. We have to show that the universal covering space $Y'$ of the base space $Y$ is $\kk$-acyclic for every field $\mathbb{K}$. By contradiction,  suppose that this is not the case. Since $Y'$ is simply connected, this means that:
\begin{equation}\label{nonacyclic}
\text{there exists }m\geq 2 \text{ and a field }\kk \text{ such that }H_m(Y';\kk) \not= 0.
\end{equation}
Now, according to Proposition \ref{prop-lift-hur}, let us consider the lifted Hurewicz fibration $\pi':X' \to Y'$. Then, $X'$ and $F'_{0}=(\pi')^{-1}(y'_{0})$, with $P_{Y}(y_{0}')=y_{0}$, are separable metric spaces with the homotopy type of a CW-complex and of finite covering dimension $\dim X' , \dim F'_{0}<+\infty$. Moreover, by (b') of Proposition \ref{prop-lift}, $Y'$ has the homotopy type of a finite dimensional CW-complex and by (c.1) of Proposition \ref{prop-lift-hur}, $F'_{0}$ is locally contractible.
\smallskip

Using \eqref{nonacyclic} joint with Proposition \ref{prop-cech} applied to $Y'$, we deduce that there exists an integer  $n \geq m$ such that $H_n(Y';\kk) \not= 0$ and $H_k(Y';\kk) = 0$, for every $k>n$. Therefore the following result by J.P. Serre, \cite[Proposition 11, p. 484]{Se-annals}  can by applied to the simply connected space $Y'$:
\begin{theorem}\label{th_Serre}
Let $Z$ be a simply connected space and assume that there exists an integer $n \geq 2$ such that the  singular homology of $Z$ with coefficients in a field $\mathbb{K}$ satisfies $H_k(Z;\mathbb{K})=0$ for every $k > n$ and $H_n(Z;\kk) \not=0$. Then, for every $i \geq 0$ there exists $0<j<n$ such that $H_{i+j}(\Omega Z;\kk) \not=0$.
\end{theorem}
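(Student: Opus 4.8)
The plan is to run the homology Serre spectral sequence of the path fibration $\varepsilon\colon\mathcal{P}_{z_{0}}(Z)\to Z$ considered in Section 2.2, whose fibre is $\Omega Z=\Omega_{z_{0}}Z$ and whose total space $\mathcal{P}_{z_{0}}(Z)$ is contractible. Since $Z$ is simply connected the local coefficient system on the base is trivial, and since $\kk$ is a field the second page factors as
\[
E^{2}_{p,q}=H_{p}(Z;\kk)\otimes_{\kk}H_{q}(\Omega Z;\kk),
\]
so that $E^{2}_{p,q}\neq 0$ if and only if both $H_{p}(Z;\kk)\neq 0$ and $H_{q}(\Omega Z;\kk)\neq 0$. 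As the total space is contractible, the sequence converges to the homology of a point, i.e. $E^{\infty}_{p,q}=0$ for $(p,q)\neq(0,0)$. This is the structural input I would exploit.

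The heart of the argument is to inspect the top column $p=n$. The hypotheses $H_{n}(Z;\kk)\neq 0$ and $H_{k}(Z;\kk)=0$ for $k>n$ say that column $n$ is the last nonzero column; in particular no differential can enter it, since the differentials $d^{r}\colon E^{r}_{p,q}\to E^{r}_{p-r,q+r-1}$ that would land in column $n$ must originate in a column $p=n+r>n$. Consequently a class in $E^{r}_{n,q}$ can be destroyed only by supporting an outgoing differential. Now fix $q$ with $H_{q}(\Omega Z;\kk)\neq 0$; then $E^{2}_{n,q}\neq 0$, and since this group must vanish on the $E^{\infty}$ page there is some $r$, necessarily with $2\le r\le n$ (so that the target column $n-r\ge 0$), for which $d^{r}$ is nonzero on $E^{r}_{n,q}$. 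Its nonzero image forces $E^{2}_{n-r,\,q+r-1}\neq 0$, whence $H_{q+r-1}(\Omega Z;\kk)\neq 0$. Writing $s=r-1$, I obtain the key propagation statement
\[
H_{q}(\Omega Z;\kk)\neq 0\ \Longrightarrow\ H_{q+s}(\Omega Z;\kk)\neq 0\ \text{ for some } s\in\{1,\dots,n-1\}.
\]

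With this in hand the conclusion is a short bookkeeping argument. Since $Z$ is simply connected, $\Omega Z$ is path connected and $H_{0}(\Omega Z;\kk)=\kk\neq 0$; starting from degree $0$ and iterating the propagation produces an unbounded strictly increasing sequence $0=q_{0}<q_{1}<q_{2}<\cdots$ with $H_{q_{k}}(\Omega Z;\kk)\neq 0$ and $q_{k+1}-q_{k}\le n-1$ for every $k$. Given any $i\ge 0$, let $q_{K}$ be the largest term with $q_{K}\le i$ (it exists because $q_{0}=0$ and $q_{k}\to\infty$); then $i<q_{K+1}\le q_{K}+(n-1)\le i+(n-1)$, so $j:=q_{K+1}-i$ lies in $(0,n)$ and satisfies $H_{i+j}(\Omega Z;\kk)\neq 0$, which is exactly the assertion.

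The only genuinely delicate points, and the ones I would verify carefully, are the two places where the hypotheses on $Z$ enter: the triviality of the coefficient system, which rests on simple connectivity and lets me split $E^{2}$ as a tensor product over the field $\kk$; and the bookkeeping that column $n$ receives no differentials while being nonzero, which is precisely where both $H_{n}(Z;\kk)\neq 0$ and $H_{k}(Z;\kk)=0$ for $k>n$ are used, and which is what guarantees that the surviving classes are forced to map leftward into strictly higher loop-space degrees. Everything else is the formal behaviour of a first-quadrant spectral sequence converging to a point.
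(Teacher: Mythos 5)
Your argument is correct. Note, however, that the paper does not prove this statement at all: it is quoted as a black box from Serre's Annals paper (Proposition 11, p.~484 of \emph{Homologie singuli\`ere des espaces fibr\'es}). Your proof --- the homology Serre spectral sequence of the path--loop fibration with field coefficients, the observation that the rightmost nonzero column $p=n$ receives no differentials and so each nonzero $E^2_{n,q}$ must support a nonzero $d^r$ with $2\le r\le n$ landing in a nonzero $E^2_{n-r,q+r-1}$, and the final bookkeeping propagating nonvanishing of $H_*(\Omega Z;\kk)$ upward in steps of size at most $n-1$ starting from $H_0$ --- is precisely Serre's original argument, so there is nothing to compare beyond saying you have correctly reconstructed the cited proof.
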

In view of this result, the singular homology of the loop space $\Omega_{y'_{0}} Y'$ satisfies
\[
H_{k}(\Omega_{y'_{0}} Y';\kk) \not=0\text{, for infinitely many }k>0.
\]
On the other hand, since we are assuming that $X'$ is contractible, it follows from Proposition \ref{prop-contrfibre} that $F'_{0}$ has the homotopy type of $\Omega_{y'_{0}} Y'$. Whence, using again Proposition \ref{prop-cech} with $X=F'_{0}$, we conclude
\[
H_k(\Omega_{y'_{0}} Y';\kk) \simeq H_k(F'_{0};\kk)  =0 \text{, }\forall k\gg 1.
\]
Contradiction.\\

(b) Assume that $X$ is aspherical (i.e. $X'$ is contractible) and that $F_{0} = \pi^{-1}(y_{0})$ is a finite dimensional CW-complex. Combining Proposition \ref{prop-lift-hur} (c.2) with Proposition \ref{prop-contrfibre} we have that $\widetilde{\Omega_{y_{0}} Y}$ has the same  homotopy type of the finite dimensional CW-complex $F'_{0}=(\pi')^{-1}(y'_{0})$, with $P_{Y}(y'_{0})=y_{0}$. It follows from Proposition \ref{prop-cech} and from the homotopy invariance of the singular homology that
\[
H_k(\widetilde{\Omega_{y_{0}} Y};\kk) \simeq H_k(\Omega_{y'_{0}} Y';\kk) \simeq H_k(F'_{0};\kk)  =0 \text{, }\forall k \geq \dim X.
\]
 The proof is completed.
\end{proof}

As a corollary of the above arguments, we obtain the

\begin{proof}[Proof of Theorem \ref{corollary-cw}]
Assume that $X$ is aspherical. By Proposition \ref{prop-contrfibre} (a) and Proposition \ref{prop-lift-hur} (c.1), $\Omega_{y'_{0}} Y'$ is homotopy equivalent to the locally contractible space $F'_{0}=(\pi')^{-1}(y'_{0})$, with $P_{Y}(y'_{0}) = y_{0}$. Moreover, $F'_{0}$ is paracompact because it is a closed subspace of the CW-complex $X'$, which is paracompact; \cite{Miy-Tohoku}, \cite[Proposition 1.3.5]{FrPi-book}. Finally, since the covering dimension of the CW-complex $X'$ is exactly its CW-dimension, \cite[Proposition 1.5.14]{FrPi-book}, and since $F'_{0}$ is a closed subspace of $X'$ we have $\dim F'_{0} \leq \dim X' < +\infty$, \cite{En-dim}. To conclude, we now use Proposition \ref{prop-cech} as in the proof of Theorem \ref{th-general}.
\end{proof}

\bigskip\bigskip

\end{document}